\documentclass[pdflatex,sn-mathphys-num]{sn-jnl}

\usepackage{amsfonts}
\usepackage{booktabs}
\usepackage{mathtools}
\usepackage{enumitem}
\usepackage{algorithm}
\usepackage{algpseudocode}
\usepackage{pgfplots}
\usepgfplotslibrary{groupplots}
\pgfplotsset{compat=1.18}
\usepackage{cleveref}

\algrenewcommand\algorithmicrequire{\textbf{Input:}}
\algrenewcommand\algorithmicensure{\textbf{Output:}}

\DeclareMathOperator{\diag}{diag}
\DeclareMathOperator{\ran}{ran}
\renewcommand{\Re}{\operatorname{Re}}

\newcommand{\C}{\mathbb{C}}
\newcommand{\R}{\mathbb{R}}
\newcommand{\N}{\mathbb{N}}
\newcommand{\X}{\mathcal{X}}
\newcommand{\dom}{\operatorname{dom}}

\theoremstyle{thmstyleone}
\newtheorem{theorem}{Theorem}[section]
\newtheorem{proposition}[theorem]{Proposition}

\begin{document}

\title[Operator-ADI balanced truncation]{Operator-ADI balanced truncation of delay differential systems}

\author*[1]{\fnm{Timo} \sur{Reis}}
\email{timo.reis@tu-ilmenau.de}

\affil*[1]{\orgname{Technische Universit\"at Ilmenau},
\orgaddress{\city{Ilmenau}, \country{Germany}}}

\abstract{We develop a balanced truncation method for linear delay
differential systems based on a finite-rank alternating-direction implicit
(ADI) iteration for the associated operator Lyapunov equations. The method
acts directly on the infinite-dimensional history-state realization and does
not discretize the delay interval during Gramian approximation or balancing.
The finite-rank reachability and observability factors are represented exactly
in Takenaka--Malmquist coordinates, reducing the required computations to
finite-dimensional linear algebra. These factors yield a finite-dimensional,
delay-free reduced model. For real system data, the implementation remains in
real arithmetic. Under suitable conditions on the ADI shifts and a gap in the
Hankel singular values, we prove convergence in $\mathcal H_\infty$ to the
reduced transfer function obtained by exact balanced truncation of the
infinite-dimensional system. The construction extends to finitely many
discrete delays. Numerical experiments validate the method against independent
reference computations. 
The complete MATLAB implementation used in the experiments is
provided as supplementary material.}

\keywords{delay differential equations, balanced truncation, model order
reduction, alternating-direction implicit iteration, operator Lyapunov
equations, Takenaka--Malmquist systems}
\pacs[MSC Classification]{65F45, 93B11, 93B28, 93C23, 65F55}

\maketitle

\section{Introduction}
\label{sec:introduction}

Time delays arise through transport, communication, processing, and many
other mechanisms; see, e.g.,
\cite{Erneux2009,InspergerStepan2011,Smith2011,SipahiEtAl2011}.
Although a delay differential equation may be described by finitely many
matrices and delay parameters, its natural state is infinite-dimensional,
since the evolution depends on a~solution history; see,
e.g., \cite{HaleVerduynLunel1993,BatkaiPiazzera2005,
MichielsNiculescu2014}. We consider retarded delay differential equations
\begin{equation}
    \dot{x}(t)
    =
    A_0x(t)+A_1x(t-h)+Bu(t),
    \qquad
    y(t)=Cx(t),
    \label{eq:dde}
\end{equation}
where
$A_0,A_1\in\R^{n\times n}$,
$B\in\R^{n\times m}$,
$C\in\R^{p\times n}$, and $h>0$.
Besides the current value, an initial history
$x(\theta)=\phi_0(\theta)$, $-h\leq\theta\leq0$, has to be prescribed.
Introducing
$\phi(t,\theta)=x(t+\theta)$ gives the equivalent ODE--transport
realization
\begin{equation}\begin{aligned}
    \dot{x}(t)
    &=
    A_0x(t)+A_1\phi(t,-h)+Bu(t),
    \\
    \partial_t\phi(t,\theta)
    &=
    \partial_\theta\phi(t,\theta),
    \qquad -h<\theta<0,
    \\
    \phi(t,0)&=x(t),
    \\
    y(t)&=Cx(t).
\end{aligned}\label{eq:transport-realization}
\end{equation}
We therefore work on the Hilbert state space
$\X=\C^n\times L^2(-h,0;\C^n)$,
with state $(x(t),\phi(t,\cdot))$; see
\cite{HaleVerduynLunel1993,BatkaiPiazzera2005}.

Our aim is to apply balanced truncation directly to this
infinite-dimensio\-nal realiza\-tion and to approximate its input-output behavior
by a finite-dimensional (in particular delay-free) system
\begin{equation}
    \dot{x}_r(t)=A_rx_r(t)+B_ru(t),
    \qquad
    y_r(t)=C_rx_r(t).
    \label{eq:reduced-ode}
\end{equation}
Thus the reduced order $r$ is the dimension of the delay-free surrogate and
is not tied to the dimension $n$ of the instantaneous state. Balanced
truncation is applied to the infinite-dimensional history-state
realization, not to the $n$-dimensional instantaneous state alone.
Consequently, $r$ need not be smaller than $n$.

Many reduction approaches preserve the delay structure, for instance by
interpolation or projection
\cite{BeattieGugercin2009,JarlebringDammMichiels2013,
NaderiLordejaniEtAl2020,BeckerRichter2022}, while others first approximate
the history dynamics by a finite-dimensional system
\cite{InspergerStepan2011,BredaMasetVermiglio2005,
MichielsJarlebringMeerbergen2011}. Delay-free approximations based on
Laguerre, Kautz, and more general shift expansions were studied in
\cite{MakilaPartington1999Laguerre,MakilaPartington1999Shift}.
Here the Gramian approximation and balancing are carried out directly on
the infinite-dimensional history-state realization, without discretizing
the delay state in these steps.

The infinite-dimensional theory of balanced truncation is developed in
\cite{GloverCurtainPartington1988,GuiverOpmeer2014,ReisSelig2014}.
Balanced and Hankel-norm approximations for delay systems were considered
in \cite{GloverLamPartington1986,PartingtonGloverZwartCurtain1988}, and
numerical balanced approximations for special scalar delay transfer
functions in \cite{Lam1999}. We use the finite-rank operator ADI iteration
of \cite{OpmeerReisWollner2013} and exploit the delay structure so that
each shifted operator equation reduces to a finite-dimensional linear
system while the history component is represented exactly.

The convergence rate of ADI depends strongly on the shift parameters. For
the delay generator, the relevant spectral data are the characteristic
roots
\[
    \det\Delta(\lambda)=0,
    \qquad
    \Delta(\lambda)
    :=
    \lambda I-A_0-A_1e^{-\lambda h};
\]
see \cite{HaleVerduynLunel1993,MichielsNiculescu2014}.
We localize rightmost roots by a pseudospectral approximation and refine
them directly for the nonlinear characteristic equation
\cite{BredaMasetVermiglio2005,WuMichiels2012}. The resulting roots are used
only for shift selection; neither the Gramian approximation nor the
balanced truncation itself relies on a discretization of the delay
interval.

The main contributions of this paper are as follows.
\begin{enumerate}[label=(\roman*)]
    \item
    For the matrix delay system \ref{eq:dde}, we turn balanced truncation
of the infinite-dimen\-sional history-state realization into a computational
procedure that represents the delay state exactly throughout the ADI and
balancing steps and produces a finite-dimensional, delay-free ODE.
    \item
    Building on the Takenaka--Malmquist interpretation of finite-rank
    operator ADI in \cite{OpmeerReisWollner2013}, we prove convergence of
    the resulting ADI-based reduced models to the exact
    infinite-dimensional balanced truncation. For the delay realization,
    we directly verify the reachability regularity required by this
    argument. In particular, the reduced transfer functions converge
    in $\mathcal H_\infty$.
    \item
    We develop an exact real-arithmetic realization of the finite-rank
    operator ADI iteration in which real shifts require $n\times n$ systems
    and conjugate shift pairs require real $2n\times2n$ block systems, while
    the history components are represented exactly. The construction
    extends to finitely many discrete retarded delays without increasing
    these solve dimensions.
    \item
    We combine characteristic-root computations with a staged ADI shift
    strategy and goal-oriented stopping and order selection, and compare
    the resulting reduced models with those obtained from an independent
    Chebyshev discretization. A MATLAB implementation of the complete
    procedure used in the numerical experiments is provided as
    Online Resource~1.
    \end{enumerate}

\section{Notation and preliminaries}
\label{sec:preliminaries}

We write
$\R_+=[0,\infty)$ and
$\C_+=\{z\in\C:\Re z>0\}$.
For Hilbert spaces $\mathcal X_1$ and $\mathcal X_2$,
$\mathcal L(\mathcal X_1,\mathcal X_2)$ denotes the space of bounded
linear operators, and $A^*$ denotes the Hilbert-space adjoint of an
operator $A$.
We write $\sigma(A)$ and $\rho(A)$ for the spectrum and resolvent set,
respectively. The nuclear and Hilbert--Schmidt norms are denoted by
$\|\cdot\|_1$ and $\|\cdot\|_{\rm HS}$, respectively.
For finite-dimensional matrices, $\|\cdot\|_2$ and $\|\cdot\|_F$ denote
the spectral and Frobenius norms, respectively, $\otimes$ denotes the
Kronecker product, and $M^\top$ denotes the transpose. For an exponentially
stable system with transfer function $G$, we use
\[
    \|G\|_{\mathcal H_\infty}
    :=
    \sup_{\omega\in\R}\|G(\mathrm i\omega)\|_2.
\]
We consider infinite-dimensional linear systems of the form
\begin{equation}
    \dot z(t)=\mathcal A z(t)+\mathcal B u(t),
    \qquad
    y(t)=\mathcal C z(t),
    \label{eq:abstract-system}
\end{equation}
where $\X$ is a Hilbert space,
$\mathcal A:\dom(\mathcal A)\subset\X\to\X$
generates a strongly continuous semigroup $T=(T(t))_{t\geq0}$,
$\mathcal B\in\mathcal L(\C^m,\X)$, and
$\mathcal C\in\mathcal L(\X,\C^p)$.
Throughout the paper, the semigroup is assumed to be exponentially stable,
i.e., there exist $M_T\geq1$ and $\omega>0$ such that
\[
    \|T(t)\|\leq M_T e^{-\omega t},
    \qquad t\geq0.
\]
The balanced truncation and Lyapunov theory used below is available in
considerably more general settings, including unbounded admissible control
and observation operators. The present assumptions are sufficient for the
delay systems considered here and avoid these additional technicalities;
see, e.g., \cite{GloverCurtainPartington1988,GuiverOpmeer2014,
ReisSelig2014,TucsnakWeiss2009}.

\section{Balanced truncation and operator ADI}
\label{sec:bt}

\subsection{Balanced truncation}
\label{sec:bt:bt}

The infinite-time reachability and observability operators associated with
\eqref{eq:abstract-system} are defined by
$\mathcal R:L^2(\R_+;\C^m)\to\X$, $\mathcal O:
\X\to L^2(\R_+;\C^p)$ with
\[
    \mathcal R u
    =
    \int_0^\infty T(t)\mathcal B u(t)\,dt,
    \qquad
    (\mathcal O z)(t)
    =
    \mathcal C T(t)z.
\]
The corresponding reachability and observability Gramians are
$\mathcal P=\mathcal R\mathcal R^*$ and
$\mathcal Q=\mathcal O^*\mathcal O$. They are, respectively, the unique
bounded, self-adjoint, nonnegative solutions of the operator Lyapunov equations
\[
    \mathcal A\mathcal P
    +\mathcal P\mathcal A^*
    +\mathcal B\mathcal B^*
    =0,
    \qquad
    \mathcal A^*\mathcal Q
    +\mathcal Q\mathcal A
    +\mathcal C^*\mathcal C
    =0,
\]
in the corresponding weak sense; see
\cite[Theorem~5.1.1]{TucsnakWeiss2009}.

Since the input and output spaces are finite-dimensional, exponential
stability together with boundedness of $\mathcal B$ and $\mathcal C$
implies that the reachability and observability operators are
Hilbert--Schmidt. Consequently, $\mathcal P$ and $\mathcal Q$ are nuclear and the
\emph{Hankel operator}
$\mathcal H=\mathcal O\mathcal R$
is nuclear; see \cite[Theorem~4]{CurtainSasane2001}. In particular, if
$\sigma_1\geq\sigma_2\geq\cdots\geq0$
denote the singular values of $\mathcal H$, counted with multiplicity, then
\[\sum_{j=1}^\infty \sigma_j<\infty.\]
The numbers $\sigma_j$ are referred to as the \emph{Hankel singular values}
of \eqref{eq:abstract-system}.
A convenient factor-based construction of balanced realizations was
introduced in the finite-dimensional setting in
\cite{TombsPostlethwaite1987} and extended to infinite-dimensional systems
in \cite{ReisSelig2014}. Let $\mathcal X_P$ and
$\mathcal X_Q$ be Hilbert
spaces and let
$R\in\mathcal L(\mathcal X_P,\X)$,
$S\in\mathcal L(\mathcal X_Q,\X)$
satisfy
$\mathcal P=RR^*$,
$\mathcal Q=SS^*$.
By \cite[Theorem~5.1]{ReisSelig2014}, $S^*R$ is unitarily equivalent to
the Hankel operator on the corresponding effective subspaces, and its
nonzero singular values are precisely the Hankel singular values. For
$r\in\N$ with $\sigma_r>0$, let
$U_r:\C^r\to\mathcal X_Q$ and
$V_r:\C^r\to\mathcal X_P$ map the canonical basis vectors of $\C^r$
to leading left and right singular vectors of $S^*R$, respectively, and set
\[
    \Sigma_r=\operatorname{diag}(\sigma_1,\ldots,\sigma_r).
\]
The associated trial and test operators are
\[
    \mathcal V_r
    =
    RV_r\Sigma_r^{-1/2},
    \qquad
    \mathcal W_r
    =
    SU_r\Sigma_r^{-1/2},
\]
and satisfy
$\mathcal W_r^*\mathcal V_r=I_r$.
Provided that
$\ran\mathcal V_r\subset\dom(\mathcal A)$,
the corresponding reduced matrices are given by
\begin{equation}
    A_r
    =
    \mathcal W_r^*\mathcal A\mathcal V_r,\qquad    B_r
    =
    \mathcal W_r^*\mathcal B,
    \qquad
    C_r
    =
    \mathcal C\mathcal V_r
    =
    \mathcal C RV_r\Sigma_r^{-1/2}.
    \label{eq:bt-r}
\end{equation}
Thus, balanced truncation yields a finite-dimensional delay-free system
of the form \eqref{eq:reduced-ode}.
If the truncation does not split a multiple Hankel singular value, the
reduced system is exponentially stable and its transfer function $G_r$
satisfies the error bound
\begin{equation}
    \|G-G_r\|_{\mathcal H_\infty}
    \leq
    2\sum_{j>r}\sigma_j;
    \label{eq:bt-error-bound}
\end{equation}
see \cite{GloverCurtainPartington1988,GuiverOpmeer2014}.
Nuclearity makes the right-hand side finite and implies convergence to
zero as $r\to\infty$. The sharper bounds in
\cite{GloverCurtainPartington1988,GuiverOpmeer2014} count each distinct
truncated Hankel singular value only once; we use the
multiplicity-counted form \eqref{eq:bt-error-bound} throughout.

\subsection{ADI iteration and Takenaka--Malmquist systems}
\label{sec:bt:adi-tm}

We next recall the ADI iteration for operator Lyapunov
equations and its relation to Takenaka--Malmquist systems following
\cite{OpmeerReisWollner2013}. We restrict the presentation to the
reachability Gramian. The corresponding statements for the observability
Gramian follow by dualization. We use shift parameters in the open right
half-plane, which amounts to reflecting the convention used in
\cite{OpmeerReisWollner2013} at the imaginary axis.

Let $(p_j)_{j\geq1}$ be a sequence in $\C_+$ (referred to as the ADI shift parameters), and set
\[
    \gamma_j=\sqrt{2\Re p_j},
    \qquad j\in\N.
\]
Starting with
\begin{equation}
    V_1=(p_1I-\mathcal A)^{-1}\mathcal B,
    \qquad
    R_1=\gamma_1 V_1,
    \label{eq:adi-first-step}
\end{equation}
the finite-rank ADI iteration is given recursively by
\begin{align}
    V_j
    &=
    V_{j-1}
    -
    (p_j+\overline{p}_{j-1})
    (p_jI-\mathcal A)^{-1}V_{j-1},
    \label{eq:adi-recursion-V}
    \\
    R_j
    &=
    \bigl[
        R_{j-1},\,
        \gamma_jV_j
    \bigr],
    \qquad j\geq2.
    \label{eq:adi-recursion-R}
\end{align}
For finite-dimensional input space $\C^m$, one has
$R_j\in\mathcal L(\C^{mj},\X)$,
and
$\mathcal P_j=R_jR_j^*$
is a finite-rank approximation of the reachability Gramian $\mathcal P$;
cf. \cite[Algorithm~1]{OpmeerReisWollner2013}.

The relation between this iteration and Takenaka--Malmquist systems is
central to its convergence analysis. Associated with the shift sequence
$(p_j)_{j\geq1}$ is the frequency-domain Takenaka--Malmquist system
\[
    \widehat\psi_j(s)
    =
    \frac{\gamma_j}{s+p_j}
    \prod_{\ell=1}^{j-1}
    \frac{s-\overline{p}_\ell}{s+p_\ell},
    \qquad j\geq1.
\]
The inverse Laplace transforms $(\psi_j)_{j\geq1}$ form an orthonormal
system in $L^2(\R_+)$. Writing
\[
    \Psi_j(t):=[\,\psi_1(t)\ \cdots\ \psi_j(t)\,],
\]
they admit the finite-dimensional realization
\begin{equation}
    \Psi_j'(t)=-\Psi_j(t)\mathsf K_j,
    \qquad
    \Psi_j(0)=\mathsf L_j,
    \label{eq:tm-matrix-realization}
\end{equation}
where
\[
    (\mathsf K_j)_{rs}
    =
    \begin{cases}
        p_r, & r=s,\\
        \gamma_r\gamma_s, & r<s,\\
        0, & r>s,
    \end{cases}
    \qquad
    \mathsf L_j=[\,\gamma_1\ \cdots\ \gamma_j\,].
\]
For $j\in\N$, define the isometric embedding
\[
    \iota_j:\C^{mj}\to L^2(\R_+;\C^m),
    \qquad
    \iota_j(v_1,\ldots,v_j)
    =
    \sum_{\ell=1}^j\psi_\ell v_\ell.
\]
A key observation in \cite[Lemma~4.6]{OpmeerReisWollner2013} is that the
ADI factor is precisely the restriction of the reachability operator
$\mathcal R$ to the corresponding Takenaka--Malmquist subspace, that is,
$R_j=\mathcal R\iota_j$.
Consequently,
$\mathcal P_j
    =
    \mathcal R\iota_j\iota_j^*\mathcal R^*$. 
The Takenaka--Malmquist system is complete in $L^2(\R_+)$ if the shifts
satisfy the non-Blaschke condition
\begin{equation}
    \sum_{j=1}^\infty
    \frac{\Re p_j}{1+|p_j|^2}
    =
    \infty.
    \label{eq:non-blaschke}
\end{equation}
Under \eqref{eq:non-blaschke}, the orthogonal projections
$\iota_j\iota_j^*$ converge strongly to the identity and
$\mathcal P_j$ converges monotonically and strongly to $\mathcal P$.
Since $\mathcal R$ is Hilbert--Schmidt, one moreover has
\[\|\mathcal P-\mathcal P_j\|_1
    \rightarrow0,
    \qquad j\to\infty,
\]
where $\|\cdot\|_1$ denotes the nuclear norm; see
\cite[Theorem~4.7]{OpmeerReisWollner2013}.

\subsection{ADI-based balanced truncation}
\label{sec:bt:adi-bt}

We now apply the construction of \Cref{sec:bt:adi-tm} to the reachability
Lyapunov equation and, by dualization, to the observability Lyapunov
equation. Let
\[
    \iota_j:\C^{mj}\to L^2(\R_+;\C^m),
    \qquad
    \jmath_\ell:\C^{p\ell}\to L^2(\R_+;\C^p)
\]
be the isometric embeddings associated with the first $j$ and $\ell$
functions of the respective Takenaka--Malmquist systems. The corresponding
finite-rank ADI factors satisfy
$R_j=\mathcal R\iota_j$,
$S_\ell=\mathcal O^*\jmath_\ell$.
Consequently,
$S_\ell^*R_j
    =
    \jmath_\ell^*\mathcal H\iota_j$.
As is customary in large-scale finite-dimensional balanced truncation, we
replace the exact Gramian factors by the finite-rank factors obtained from
the ADI iteration and apply the square-root balancing procedure to these
approximate factors; see, e.g., \cite{BennerLiPenzl2008}. Thus, let
\[
    S_\ell^*R_j=U\widehat\Sigma V^*,
    \qquad
    S_\ell^*R_j\in\C^{p\ell\times mj},
\]
be a singular value decomposition. The diagonal entries of
$\widehat\Sigma$ are approximations of the Hankel singular values. For a reduced order $r$ with $\widehat\sigma_r>0$, let $U_r$ and $V_r$
contain the corresponding leading $r$ left and right singular vectors,
respectively, and let $\widehat\Sigma_r$ contain the leading $r$ singular
values. We then define
\[
    \mathcal V_r
    =
    R_jV_r\widehat\Sigma_r^{-1/2},
    \qquad
    \mathcal W_r
    =
    S_\ell U_r\widehat\Sigma_r^{-1/2}.
\]
By construction,
$\mathcal W_r^*\mathcal V_r=I_r$. Whenever
$\ran R_j\subset\dom(\mathcal A)$, the ADI-based reduced model is obtained
from these trial and test operators as in \eqref{eq:bt-r}.

In finite dimensions, perturbation and error analysis for balanced
truncation based on approximate Gramians has recently been developed in
\cite{ZhangLi2024}. Related convergence results for infinite-dimensional
approximation schemes can be found in
\cite{Singler2012,GuiverOpmeer2014}. These results do not directly apply here. Indeed, the projections onto the
finite-dimensional Takenaka--Malmquist subspaces need not preserve the
shift structure of the Hankel operator, so that
$\jmath_\ell\jmath_\ell^*\mathcal H\iota_j\iota_j^*$
is in general not itself the Hankel operator of an approximating system.
We therefore work directly with the singular subspaces of these
compressions.
\begin{theorem}[Convergence of ADI-based balanced truncation]
\label{thm:adi-bt-convergence}
Let $\mathcal X$ be a Hilbert space, let
$\mathcal A:\dom(\mathcal A)\subset\mathcal X\to\mathcal X$
generate an exponentially stable strongly continuous semigroup, and let
$\mathcal B\in\mathcal L(\C^m,\mathcal X)$ and
$\mathcal C\in\mathcal L(\mathcal X,\C^p)$. Assume moreover that
$\ran\mathcal R\subset\dom(\mathcal A)$.
Let the reachability and observability shift sequences satisfy the
non-Blaschke condition \eqref{eq:non-blaschke}, and let $r\in\N$ satisfy
$\sigma_r>\sigma_{r+1}$.
Let $G_r^{\rm BT}$ denote the transfer function of the exact order-$r$
balanced truncation obtained from the Gramian factors
$\mathcal R$ and $\mathcal O^*$, and let
$\widehat G_r^{(\ell,j)}$ be the order-$r$ reduced transfer function
obtained from the ADI factors
$R_j=\mathcal R\iota_j$ and
$S_\ell=\mathcal O^*\jmath_\ell$.

Then, for all sufficiently large $j$ and $\ell$,
$\widehat G_r^{(\ell,j)}$ is stable and
\[
    \|\widehat G_r^{(\ell,j)}-G_r^{\rm BT}\|_{\mathcal H_\infty}
    \rightarrow0.
\]
Consequently,
\begin{equation}
    \|G-\widehat G_r^{(\ell,j)}\|_{\mathcal H_\infty}
    \rightarrow
    \|G-G_r^{\rm BT}\|_{\mathcal H_\infty}
    \leq
    2\sum_{k>r}\sigma_k.
    \label{eq:adi-bt-asymptotic-bound}
\end{equation}
\end{theorem}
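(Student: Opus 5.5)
Since $\mathcal R$ and $\mathcal O$ are Hilbert--Schmidt and $\iota_j\iota_j^*\to I$, $\jmath_\ell\jmath_\ell^*\to I$ strongly, I will first show that the compressed Hankel operators
\[
\mathcal H_{\ell,j}:=\jmath_\ell\jmath_\ell^*\mathcal H\iota_j\iota_j^*
\]
converge to $\mathcal H$ in Hilbert--Schmidt norm, hence in operator norm. The argument is as follows. $\mathcal H=\mathcal O\mathcal R$ is a product of Hilbert--Schmidt operators. Strong convergence of uniformly bounded projections, composed with a compact (Hilbert--Schmidt) operator, gives norm convergence in the HS norm. This follows either by approximating $\mathcal R$ by finite-rank operators, or directly from $\|\mathcal R(I-\iota_j\iota_j^*)\|_{\rm HS}^2=\operatorname{tr}(\mathcal P-\mathcal P_j)\to0$, which is \cite[Theorem~4.7]{OpmeerReisWollner2013}; the same holds for $\mathcal O$. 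Since $S_\ell^*R_j=\jmath_\ell^*\mathcal H\iota_j$ is unitarily equivalent to $\mathcal H_{\ell,j}$ on the relevant subspaces, Weyl's inequality yields $\widehat\sigma_k\to\sigma_k$ for $k\le r+1$. In particular $\widehat\sigma_r>\widehat\sigma_{r+1}$ and $\widehat\sigma_r>0$ for large $j,\ell$. The gap $\sigma_r>\sigma_{r+1}$ lets me invoke a Wedin/Davis--Kahan $\sin\Theta$ theorem. The embedded leading singular subspaces $\iota_jV_r$ and $\jmath_\ell U_r$ converge, as orthogonal projections, to the leading right and left singular projections of $\mathcal H$. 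Any multiplicity inside $\sigma_1,\dots,\sigma_r$ is harmless, because only the full $r$-block enters.

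Next I will transfer subspace convergence into convergence of the reduced matrices up to a common similarity. The plan is to fix exact singular vector matrices $V_r^{\rm ex},U_r^{\rm ex}$ of $\mathcal H$ and choose unitary $r\times r$ alignments $Z_{\ell,j}$, for instance via the polar factor of $(V_r^{\rm ex})^*\iota_jV_r$. With this choice $\iota_jV_rZ\to V_r^{\rm ex}$, and I will show that the same alignment makes $\jmath_\ell U_rZ\to U_r^{\rm ex}$. This follows from $U_r=S_\ell^*R_jV_r\widehat\Sigma_r^{-1}$ together with $\widehat\Sigma_r^{-1}\to\Sigma_r^{-1}$ and the operator-norm convergence above. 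The subtle point is that $Z^*\widehat\Sigma_rZ\to\Sigma_r$ must hold in a common-basis sense. I intend to avoid this issue by working with the basis-independent quantities $\iota_jV_r\widehat\Sigma_r^{-1/2}$ modulo the group of transformations preserving the realization, or by using $\widehat\Sigma_r^{-1/2}$ only through $\mathcal W_r^*\mathcal V_r=I$. The transfer function is invariant under $(A_r,B_r,C_r)\mapsto(T^{-1}A_rT,T^{-1}B_r,C_rT)$, so it suffices to show that the trial and test operators converge after such a transformation. With this, $\mathcal V_rT=\mathcal R\iota_jV_r\widehat\Sigma_r^{-1/2}T\to\mathcal R V_r^{\rm ex}\Sigma_r^{-1/2}$ in $\mathcal L(\C^r,\X)$, and similarly for $\mathcal W_r$ via $\mathcal O^*$. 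This gives $B_r\to B_r^{\rm BT}$ and $C_r\to C_r^{\rm BT}$.

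The main obstacle is $A_r=\mathcal W_r^*\mathcal A\mathcal V_r$, since $\mathcal A$ is unbounded. Here I use the hypothesis $\ran\mathcal R\subset\dom(\mathcal A)$. By the closed graph theorem, $\mathcal A\mathcal R$ is bounded from $L^2(\R_+;\C^m)$ to $\X$. Then
\[
\mathcal A\mathcal V_rT=(\mathcal A\mathcal R)\,\iota_jV_r\widehat\Sigma_r^{-1/2}T,
\]
which converges in norm because $\mathcal A\mathcal R$ is bounded and the finite-dimensional factors converge. Hence $A_r\to A_r^{\rm BT}$ in the aligned coordinates. I also need $\ran R_j\subset\dom(\mathcal A)$, which follows from $R_j=\mathcal R\iota_j$.

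Finally, stability and $\mathcal H_\infty$ convergence follow. Under $\sigma_r>\sigma_{r+1}$, exact balanced truncation gives a stable $A_r^{\rm BT}$ (eq.~\eqref{eq:bt-error-bound} and the cited results). Eigenvalues depend continuously on the matrix entries, so $A_r$ is Hurwitz for large $j,\ell$. On a compact neighborhood of the closed right half-plane, including infinity through the strictly proper structure, the resolvents $(sI-A_r)^{-1}$ are uniformly bounded. The identity $C_r(sI-A_r)^{-1}B_r-C_r^{\rm BT}(sI-A_r^{\rm BT})^{-1}B_r^{\rm BT}$ is then bounded by a constant times the matrix errors, uniformly in $s\in\mathrm i\R$. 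This gives $\mathcal H_\infty$ convergence. The limit \eqref{eq:adi-bt-asymptotic-bound} follows from the triangle inequality together with \eqref{eq:bt-error-bound}.
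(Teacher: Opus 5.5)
Your proposal is correct and follows essentially the same route as the paper. Both arguments use norm convergence of the compressed Hankel operators (you use the Hilbert--Schmidt norm, the paper the nuclear norm), persistence of the gap and convergence of the leading singular subspaces, invariance of the reduced transfer function under a change of biorthogonal bases, boundedness of $\mathcal A\mathcal R$ by the closed graph theorem, and continuity for stability and $\mathcal H_\infty$ convergence. The alignment subtlety you flag is real: a single unitary $Z$ aligns the left vectors only if $\widehat\Sigma_r^{-1}Z-Z\Sigma_r^{-1}\to0$, which needs a cluster-wise argument and does not follow from $\widehat\Sigma_r\to\Sigma_r$ alone. The paper avoids it exactly as in your second option, by aligning left and right subspaces with independent isometries $U_{\ell,j}\to U$, $V_{\ell,j}\to V$ and fixing the test basis by biorthogonality as $\mathcal O^*U_{\ell,j}D_{\ell,j}^{-*}$ with $D_{\ell,j}=U_{\ell,j}^*\mathcal H V_{\ell,j}$.
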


\begin{proof}
Set
$P_j:=\iota_j\iota_j^*$,
$Q_\ell:=\jmath_\ell\jmath_\ell^*$,
$\mathcal H_{\ell,j}:=Q_\ell\mathcal H P_j$.
The non-Blaschke condition implies
$P_j\to I$ and $Q_\ell\to I$ strongly. Since $\mathcal H$ is nuclear,
\[
    \|\mathcal H_{\ell,j}-\mathcal H\|_1
    \leq
    \|(I-Q_\ell)\mathcal H\|_1
    +
    \|\mathcal H(I-P_j)\|_1
    \rightarrow0.
\]
Hence $\mathcal H_{\ell,j}\to\mathcal H$ in operator norm and, in
particular,
$\widehat\sigma_k^{(\ell,j)}\to\sigma_k$
for every fixed $k$. Thus the gap
$\widehat\sigma_r^{(\ell,j)}>
 \widehat\sigma_{r+1}^{(\ell,j)}$
persists for all sufficiently large $j,\ell$.

Let $E_r,F_r$ denote the leading right and left singular subspaces of
$\mathcal H$, and let
$E_r^{(\ell,j)},F_r^{(\ell,j)}$ denote those of
$\mathcal H_{\ell,j}$. By the gap assumption and standard perturbation
theory for spectral projections,
$E_r^{(\ell,j)}\to E_r$,
$F_r^{(\ell,j)}\to F_r$
in the gap topology; see \cite{Kato1995}. We may therefore choose
isometries
$V_{\ell,j}\to V$,
$U_{\ell,j}\to U$
whose ranges are
$E_r^{(\ell,j)},F_r^{(\ell,j)}$ and $E_r,F_r$, respectively.

Set
\[
    D_{\ell,j}
    :=
    U_{\ell,j}^*\mathcal H V_{\ell,j},
    \qquad
    D:=U^*\mathcal H V.
\]
Since the corresponding singular values are positive,
$D$ is invertible and so is $D_{\ell,j}$ for all sufficiently large
$j,\ell$. Moreover,
$D_{\ell,j}\to D$. Since
$E_r^{(\ell,j)}\subset\ran P_j$,
$F_r^{(\ell,j)}\subset\ran Q_\ell$,
we also have
$D_{\ell,j}
    =
    U_{\ell,j}^*\mathcal H_{\ell,j}V_{\ell,j}$.

Define
$\mathcal V_{\ell,j}
    :=
    \mathcal R V_{\ell,j}$,
$\mathcal W_{\ell,j}
    :=
    \mathcal O^*U_{\ell,j}D_{\ell,j}^{-*}$,
and analogously
$\mathcal V
    :=
    \mathcal R V$,
$\mathcal W
    :=
    \mathcal O^*UD^{-*}$.
Then
$\mathcal W_{\ell,j}^*\mathcal V_{\ell,j}=I_r$, $\mathcal W^*\mathcal V=I_r$.
Their ranges are precisely the trial and test spaces of the corresponding
ADI-based and exact balanced truncations. Hence changing from these bases
to any singular-vector bases changes only the reduced realization, not
its transfer function; cf. \cite[Theorem~2.3]{ZhangLi2024} for the
finite-dimensional counterpart.

Since
$\ran\mathcal R\subset\dom(\mathcal A)$ and $\mathcal A$ is closed, the
closed graph theorem gives
\[
    \mathcal A\mathcal R
    \in
    \mathcal L\bigl(L^2(\R_+;\C^m),\mathcal X\bigr).
\]
Consequently,
$\mathcal V_{\ell,j}\to\mathcal V$,
$\mathcal W_{\ell,j}\to\mathcal W$, and
$\mathcal A\mathcal V_{\ell,j}
    \to
    \mathcal A\mathcal V$.
Thus the finite-dimensional Petrov--Galerkin matrices associated with
these bases converge to those of the exact balanced truncation.

The latter is stable because the truncation does not split a multiple
Hankel singular value. Hence the ADI-based reduced models are stable for
all sufficiently large $j,\ell$. Since the corresponding reduced matrices
have fixed dimension and converge to those of the exact balanced
truncation, the resolvent identity yields uniform convergence on compact
subsets of the imaginary axis. For sufficiently large $|\omega|$, a
Neumann-series estimate gives a uniform $\mathcal O(|\omega|^{-1})$ bound
for the resolvents. Consequently,
\[
    \|\widehat G_r^{(\ell,j)}-G_r^{\rm BT}\|_{\mathcal H_\infty}
    \to0.
\]
Finally,
\[
    \left|
    \|G-\widehat G_r^{(\ell,j)}\|_{\mathcal H_\infty}
    -
    \|G-G_r^{\rm BT}\|_{\mathcal H_\infty}
    \right|
    \leq
    \|\widehat G_r^{(\ell,j)}-G_r^{\rm BT}\|_{\mathcal H_\infty},
\]
and \eqref{eq:bt-error-bound} yields
\eqref{eq:adi-bt-asymptotic-bound}.
\end{proof}

\section{Delay systems}
\label{sec:delay-systems}

The transport realization in \eqref{eq:transport-realization} suggests the
following infinite-dimen\-sio\-nal state-space representation of
\eqref{eq:dde}. We consider \eqref{eq:abstract-system} on
$\X=\C^n\times L^2(-h,0;\C^n)$
with
\begin{equation}
\begin{aligned}
    \mathcal A
    \begin{pmatrix}
        \xi\\ \phi
    \end{pmatrix}
    &=
    \begin{pmatrix}
        A_0\xi+A_1\phi(-h)\\
        \phi'
    \end{pmatrix},
    \qquad     \mathcal Bu=
    \begin{pmatrix}Bu\\0\end{pmatrix},
    \qquad
    \mathcal C
    \begin{pmatrix}\xi\\\phi\end{pmatrix}
    =C\xi,\\
    \dom(\mathcal A)
    &=
    \left\{
        \begin{pmatrix}\xi\\\phi\end{pmatrix}
        \in\C^n\times H^1(-h,0;\C^n)
        :
        \phi(0)=\xi
    \right\},
\end{aligned}    \label{eq:dde-generator}
\end{equation}
The operator $\mathcal A$ generates a strongly continuous semigroup on
$\X$; see, e.g., \cite{Delfour1977,Vinter1978,BernierManitius1978}.
Together with $\mathcal B$ and $\mathcal C$, it yields an
infinite-dimensional realization of the delay differential system
\eqref{eq:dde} in the form \eqref{eq:abstract-system}.

The adjoint of $\mathcal A$ is explicitly characterized in
\cite{Vinter1978}. Since the system matrices are real, it is given by
\begin{align*}
    \mathcal A^*
    \begin{pmatrix}
        \zeta\\ \psi
    \end{pmatrix}
    &=
    \begin{pmatrix}
        A_0^\top\zeta+\psi(0)\\
        -\psi'
    \end{pmatrix},
    \\
    \dom(\mathcal A^*)
    &=
    \left\{
        \begin{pmatrix}\zeta\\\psi\end{pmatrix}
        \in\C^n\times H^1(-h,0;\C^n)
        :
        \psi(-h)=A_1^\top\zeta
    \right\}.
\end{align*}
By \cite[Lemma~2.4.5]{CurtainZwart1995}, the spectrum of
$\mathcal A$ is characterized by the zeros of $\det\Delta(\cdot)$.
Since the system matrices are real,
$\Delta(\overline s)=\overline{\Delta(s)}$, and the characteristic roots
occur in complex conjugate pairs. Together with
$\sigma(\mathcal A^*)=\overline{\sigma(\mathcal A)}$, this yields
\begin{equation}
    \sigma(\mathcal A)
    =
    \sigma(\mathcal A^*)
    =
    \{\lambda\in\C:\det\Delta(\lambda)=0\}.
    \label{eq:dde-spectra}
\end{equation}

For $s\in\rho(\mathcal A)$, the resolvent formula
\cite[Lemma~2.4.5]{CurtainZwart1995} gives
\[
    (sI-\mathcal A)^{-1}
    \begin{pmatrix}f\\g\end{pmatrix}
    =
    \begin{pmatrix}\xi\\\phi\end{pmatrix},
\]
where
\[
    \Delta(s)\xi
    =
    f+
    A_1\int_{-h}^0 e^{s(-h-\tau)}g(\tau)\,d\tau,
    \qquad
    \phi(\theta)
    =
    e^{s\theta}\xi+
    \int_\theta^0 e^{s(\theta-\tau)}g(\tau)\,d\tau.
\]
Likewise,
\[
    (sI-\mathcal A^*)^{-1}
    \begin{pmatrix}f\\g\end{pmatrix}
    =
    \begin{pmatrix}\zeta\\\psi\end{pmatrix}
\]
is characterized by
\[
    \Delta(s)^\top\zeta
    =
    f+\int_{-h}^0e^{s\tau}g(\tau)\,d\tau,
    \qquad
    \psi(\theta)
    =
    e^{-s(\theta+h)}A_1^\top\zeta+
    \int_{-h}^{\theta}e^{-s(\theta-\tau)}g(\tau)\,d\tau.
\]
Hence, for the same shift $s$, the two resolvent evaluations require
linear solves with $\Delta(s)$ and $\Delta(s)^\top$, respectively.

For later use, we recall the classical stability criterion for retarded
delay differential equations.
\begin{proposition}
\label{prop:dde-stability}
The semigroup generated by $\mathcal A$ in \eqref{eq:dde-generator} is
exponentially stable if and only if
\[
    \sup\bigl\{
        \Re\lambda:
        \det\Delta(\lambda)=0
    \bigr\}<0.
\]
\end{proposition}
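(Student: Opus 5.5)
The plan is to prove the two implications separately. I will combine the spectral characterization \eqref{eq:dde-spectra} with the growth theory of retarded delay semigroups. The key structural fact is that the delay semigroup is eventually compact: for $t>h$, $T(t)$ is compact. The reason is that after time $h$ the history component is a segment of the solution of \eqref{eq:dde}. This solution is absolutely continuous with derivative bounded in terms of the initial data, so Arzel\`a--Ascoli applies on $\C^n\times L^2(-h,0;\C^n)$. For eventually compact (indeed eventually norm-continuous) semigroups, the spectral mapping theorem $\sigma(T(t))\setminus\{0\}=e^{t\sigma(\mathcal A)}$ holds. Consequently the growth bound $\omega_0$ coincides with the spectral bound $s(\mathcal A)=\sup\{\Re\lambda:\lambda\in\sigma(\mathcal A)\}$.

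\emph{Necessity.} Suppose $\|T(t)\|\le M_Te^{-\omega t}$. Then $\{\Re s>-\omega\}\subset\rho(\mathcal A)$, because the Laplace transform of $T$ gives the resolvent there. By \eqref{eq:dde-spectra}, every zero $\lambda$ of $\det\Delta$ therefore satisfies $\Re\lambda\le-\omega<0$, and hence the supremum is at most $-\omega<0$. This direction is elementary.

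\emph{Sufficiency.} Suppose $s_0:=\sup\{\Re\lambda:\det\Delta(\lambda)=0\}<0$. By \eqref{eq:dde-spectra}, $s(\mathcal A)=s_0$. Fix $t_0>h$. Since $T(t_0)$ is compact, its nonzero spectrum consists of eigenvalues, and the spectral mapping theorem for the point spectrum, $\sigma_p(T(t_0))\setminus\{0\}=e^{t_0\sigma_p(\mathcal A)}$, gives
\[
r(T(t_0))=\sup\{|e^{t_0\lambda}|:\lambda\in\sigma(\mathcal A)\}=e^{t_0 s_0}<1.
\]
Two facts justify this step. First, the spectrum of $\mathcal A$ is purely point spectrum by the resolvent formula. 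Second, only finitely many roots lie in any right half-plane, since $\det\Delta(\lambda)\neq0$ for $\Re\lambda$ large because $|e^{-\lambda h}|$ is bounded there. Hence the supremum is attained, or else approached along roots whose real parts are at most $s_0$. In either case, $\omega_0=t_0^{-1}\log r(T(t_0))=s_0<0$, and exponential stability follows for any $\omega\in(0,-s_0)$.

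The main obstacle is the eventual compactness together with the resulting spectral mapping identity. I would establish compactness directly from the solution representation. For $t\ge h$, the history component of $T(t)(\xi,\phi)$ is $\theta\mapsto x(t+\theta)$. Its derivative $A_0x+A_1x(\cdot-h)$ is bounded in $L^2$ by a Gronwall estimate in terms of $\|(\xi,\phi)\|$. Bounded sets in $H^1(-h,0)$ are relatively compact in $L^2$, and the first component lies in $\C^n$, so $T(t)$ is compact. Alternatively, one may simply cite the classical result from \cite{HaleVerduynLunel1993,CurtainZwart1995} that retarded delay semigroups satisfy the spectrum-determined growth condition.
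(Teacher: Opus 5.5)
Your proof is correct and follows the same route as the paper. The paper combines the spectral characterization \eqref{eq:dde-spectra} with the spectral mapping (spectrum-determined growth) property of retarded delay semigroups, which it cites from Hale--Verduyn Lunel and B\'atkai--Piazzera; you instead justify that property yourself via eventual compactness of $T(t)$ and the point-spectral mapping theorem. Your remark about finitely many roots in right half-planes is not needed, since $r(T(t_0))=\sup_{\lambda\in\sigma(\mathcal A)}e^{t_0\Re\lambda}$ holds whether or not the supremum is attained.
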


\begin{proof}
This follows from the characterization of the spectrum in
\eqref{eq:dde-spectra} together with the spectral mapping property for the
retarded delay semigroup; see
\cite[Sec.~7.6, Corollary~6.1, p.~215]{HaleVerduynLunel1993} and
\cite[Ch.~4]{BatkaiPiazzera2005}.
\end{proof}

Sobolev regularity of reachable states for retarded systems is classical;
see \cite{BanksJacobsLangenhop1975,ManitiusTriggiani1978}. For completeness,
we record the short argument needed for the infinite-time reachability
operator used here.

\begin{proposition}[Regularity of the reachability operator]
\label{prop:dde-reachability-regularity}
Let $A_0,A_1\in\R^{n\times n}$, $B\in\R^{n\times m}$, and $h>0$,
and let $\mathcal A$ and $\mathcal B$ be given by
\eqref{eq:dde-generator}. If the semigroup generated by $\mathcal A$ is
exponentially stable, then
$\ran\mathcal R\subset\dom(\mathcal A)$.
\end{proposition}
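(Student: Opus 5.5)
We compute $\mathcal R u$ explicitly for $u\in L^2(\R_+;\C^m)$ using the structure of the delay semigroup, and then check the two defining properties of $\dom(\mathcal A)$: $H^1$-regularity of the history component and the boundary coupling $\phi(0)=\xi$.

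Let $x$ denote the solution of the delay equation $\dot x(t)=A_0x(t)+A_1x(t-h)+Bv(t)$ with zero initial history, where $v(s):=u(-s)$ is the time-reversed input. More convenient is to write $\mathcal R u=\lim_{T\to\infty}T(T)\int_0^T\ldots$; equivalently, $\mathcal R u$ is the limit as $T\to\infty$ of the state at time $T$ of the system driven by $u_T(t):=u(T-t)$ on $[0,T]$ from zero history. The state of the delay semigroup at time $T$ with zero initial data is $(x_T(T),\,x_T(T+\cdot))$, where $x_T$ solves the DDE. Exponential stability yields $\|x_T\|_{L^2(0,T)}\le c\|u\|_{L^2}$ uniformly in $T$, via the fundamental solution, which is exponentially decaying by Proposition~\ref{prop:dde-stability}. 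From the equation, $\dot x_T=A_0x_T+A_1x_T(\cdot-h)+Bu_T$, so $\|\dot x_T\|_{L^2(T-h,T)}\le c'\|u\|_{L^2}$ as well.

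Hence the map $u\mapsto(x_T(T),x_T(T+\cdot))$ is bounded into $\C^n\times H^1(-h,0;\C^n)$ uniformly in $T$, and $\phi(0)=x_T(T)=\xi$ holds by continuity of $x_T$. Since $\{(\xi,\phi)\in\C^n\times H^1:\phi(0)=\xi\}$ with the graph norm is a closed subspace (equivalently, $\dom(\mathcal A)$ with the graph norm of $\mathcal A$ is a Hilbert space whose norm is equivalent to the $\C^n\times H^1$ norm), it suffices to show that the approximants converge. A cleaner route avoids limits: define $z(t)=\int_0^\infty T(s)\mathcal B u(s+t)\,ds$ for the shifted input, and write the full state as the output of the stable input-to-state map on the whole line. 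Concretely, set $x(t):=\int_0^\infty \Phi(s)Bu(s-t)\,ds$, using the fundamental matrix $\Phi$ (with $\Phi(s)=0$ for $s<0$) and $u$ extended by zero. Then $\mathcal R u=(x(0),x(\cdot)|_{[-h,0]})$ up to the reversal convention. Since $\Phi\in L^1\cap L^2$ by exponential stability, we get $x\in L^2$, and $x$ satisfies the DDE on $\R$ with forcing $B\tilde u$, which places $\dot x\in L^2_{\rm loc}$. Therefore $x|_{[-h,0]}\in H^1$ and its value at $0$ equals the $\C^n$-component, so $\mathcal R u\in\dom(\mathcal A)$.

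The main obstacle is the bookkeeping identifying $\mathcal R u$ with the trajectory-segment representation, in particular getting the time-reversal convention consistent with $\int_0^\infty T(t)\mathcal Bu(t)\,dt$ and justifying that the semigroup applied to $(Bu,0)$-type inputs reproduces the segment $x(t+\theta)$. I would verify this first on smooth, compactly supported $u$, where the mild solution is classical, and then extend by density using boundedness of $\mathcal R$ and the uniform $H^1$ estimate, together with closedness of $\dom(\mathcal A)$ in the graph norm.
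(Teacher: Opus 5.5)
Your final route, which views the history segment as a piece of the whole-line convolution, is correct and is essentially the paper's proof. The paper writes $\mathcal Ru=(\xi,\phi)$ with $\xi=\int_0^\infty\Phi(t)Bu(t)\,dt$ and $\phi(\theta)=\int_{-\theta}^\infty\Phi(t+\theta)Bu(t)\,dt$; after the substitution $t=s-\theta$ these are exactly your $x(0)$ and $x|_{[-h,0]}$, so no reversal correction is needed. The paper's derivative $\phi'(\theta)=Bu(-\theta)+\int_{-\theta}^\infty\Phi'(t+\theta)Bu(t)\,dt$ is just your forced delay equation for $x=\Phi*B\tilde u$, evaluated on $[-h,0]$. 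The identification you flag as the main obstacle also needs no density argument: it follows directly from the standard representation $T(t)\mathcal Bv=(\Phi(t)Bv,\Phi(t+\cdot)Bv)$ with $\Phi=0$ on $(-\infty,0)$, which the paper takes from Hale--Verduyn Lunel.
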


\begin{proof}
Let $\Phi:\R_+\to\R^{n\times n}$ denote the fundamental matrix,
characterized by $\Phi(0)=I$ and
\[
    \Phi'(t)
    =
    A_0\Phi(t)
    +
    \mathbf 1_{[h,\infty)}(t)A_1\Phi(t-h)
    \quad\text{for almost every }t>0.
\]
See \cite[Secs.~1.5--1.6]{HaleVerduynLunel1993}. The corresponding
fundamental-solution representation of the semigroup gives, for
$\mathcal Ru=(\xi,\phi)$,
\[
    \xi=\int_0^\infty\Phi(t)Bu(t)\,dt,
    \qquad
    \phi(\theta)
    =
    \int_{-\theta}^\infty\Phi(t+\theta)Bu(t)\,dt .
\]
Since the first component of $T(t)\mathcal Bv$ is $\Phi(t)Bv$,
exponential stability implies
$\Phi B\in L^2(\R_+;\R^{n\times m})$.
The defining equation for $\Phi$ then also gives
$\Phi'B\in L^2(\R_+;\R^{n\times m})$.
Moreover, $\phi(0)=\xi$ and, in the weak sense,
\[
    \phi'(\theta)
    =
    Bu(-\theta)
    +
    \int_{-\theta}^\infty
        \Phi'(t+\theta)Bu(t)\,dt .
\]
Hence
\[
    \|\phi'\|_{L^2(-h,0)}
    \leq
    \bigl(\|B\|+\sqrt h\,\|\Phi'B\|_{L^2(\R_+)}\bigr)
    \|u\|_{L^2(\R_+)},
\]
so that $\phi\in H^1(-h,0;\C^n)$ and $\phi(0)=\xi$.
Thus $\mathcal Ru\in\dom(\mathcal A)$.
\end{proof}
Proposition~\ref{prop:dde-reachability-regularity} verifies the additional
regularity assumption
$\ran\mathcal R\subset\dom(\mathcal A)$ required in
\Cref{thm:adi-bt-convergence}. Hence, for the delay realization, the
convergence result holds whenever the reachability and observability shift
sequences satisfy the non-Blaschke condition and
$\sigma_r>\sigma_{r+1}$.

\section{ADI on the delay state space}
\label{sec:adi}

We now derive exact finite-dimensional coordinates for the operator ADI
iteration \eqref{eq:adi-first-step}--\eqref{eq:adi-recursion-R}
on the delay state space. Throughout this section, the equivalent stability
conditions of \Cref{prop:dde-stability} are assumed. Hence the shifted
resolvents for $p\in\C_+$ are well defined, and their finite-dimensional
components require only linear solves with the characteristic matrix
$\Delta$. It remains to represent the history components.

We use the same shift sequence
$(p_i)_{i\geq1}\subset\C_+$ for reachability and observability, since
$\sigma(\mathcal A)=\sigma(\mathcal A^*)$ by
\eqref{eq:dde-spectra}. The sequence is chosen invariant under complex
conjugation, with each nonreal shift adjacent to its conjugate. We first
derive complex coordinates and then transform them to the real block form
used in the implementation.

The natural coordinates for the history components are provided by the same
Takenaka--Malmquist system that underlies the operator ADI iteration. Let
$\psi_1,\psi_2,\ldots$ be the time-domain Takenaka--Malmquist functions from
\Cref{sec:bt:adi-tm} associated with $(p_i)_{i\geq1}$, and set
\[
    \Psi_k(t)
    :=
    \begin{bmatrix}
        \psi_1(t)&\cdots&\psi_k(t)
    \end{bmatrix}.
\]
By analyticity and orthonormality on $L^2(\R_+)$,
$\psi_1,\ldots,\psi_i$ remain linearly independent when restricted to
$[0,h]$.

For $X\in\C^{n\times m}$ and $H\in\C^{ni\times m}$, define
\[
    \mathfrak E_i^P(X,H)u
    :=
    \begin{pmatrix}
        Xu\\[1mm]
        \theta\mapsto
        \bigl(\Psi_i(-\theta)\otimes I_n\bigr)Hu
    \end{pmatrix},
    \qquad u\in\C^m,
\]
and, for $Y\in\C^{n\times p}$ and $H\in\C^{ni\times p}$,
\[
    \mathfrak E_i^Q(Y,H)v
    :=
    \begin{pmatrix}
        Yv\\[1mm]
        \theta\mapsto
        \bigl(\Psi_i(\theta+h)\otimes I_n\bigr)Hv
    \end{pmatrix},
    \qquad v\in\C^p.
\]
The different parametrizations of the history interval reflect the boundary
conditions in the domains of $\mathcal A$ and $\mathcal A^*$,
respectively.

For $i\geq2$, define $a_i\in\C^i$ and
$M_i\in\C^{i\times(i-1)}$ by
\begin{align}
    e^{-p_i t}
    &=
    \Psi_i(t)a_i,
    \label{eq:delay-adi-ai}
    \\
    \int_0^t e^{-p_i(t-s)}\Psi_{i-1}(s)\,ds
    &=
    \Psi_i(t)M_i.
    \label{eq:delay-adi-Mi}
\end{align}
The functions on the left-hand sides belong to
$\operatorname{span}\{\psi_1,\ldots,\psi_i\}$ by the rational structure
of the Takenaka--Malmquist system, and the coefficients are unique by the
linear independence established above. We further set $d_i\in\C^{1\times(i-1)}$ by
\begin{equation}
    d_i
    :=
    \Psi_i(h)M_i
    =
    \int_0^h
        e^{-p_i(h-t)}\Psi_{i-1}(t)\,dt.
    \label{eq:delay-adi-small-data}
\end{equation}

The following result gives the exact representation of the shifted
resolvent evaluations occurring in operator ADI.

\begin{theorem}
\label{thm:delay-resolvent-tm}
Let $A_0,A_1\in\R^{n\times n}$, $h>0$, and let $\mathcal A$ be the
operator in \eqref{eq:dde-generator}. Let $i\geq2$ and
$p_1,\ldots,p_i\in\C_+$ with $p_i\in\rho(\mathcal A)$. Let
$a_i$, $M_i$, and $d_i$ be defined by
\eqref{eq:delay-adi-ai}--\eqref{eq:delay-adi-small-data}.
\begin{enumerate}[label=(\roman*)]
\item
For $X\in\C^{n\times m}$ and
$H\in\C^{n(i-1)\times m}$,
\[
    (p_iI-\mathcal A)^{-1}
    \mathfrak E_{i-1}^P(X,H)
    =
    \mathfrak E_i^P(\widetilde X,\widetilde H),
\]
where
\[    \Delta(p_i)\widetilde X
    =
    X+A_1(d_i\otimes I_n)H,\qquad
    \widetilde H
    =
    (a_i\otimes I_n)\widetilde X
    +(M_i\otimes I_n)H.
\]
\item
For $Y\in\C^{n\times p}$ and
$H\in\C^{n(i-1)\times p}$,
\[
    (p_iI-\mathcal A^*)^{-1}
    \mathfrak E_{i-1}^Q(Y,H)
    =
    \mathfrak E_i^Q(\widetilde Y,\widetilde H),
\]
where
\[\Delta(p_i)^\top\widetilde Y
    =
    Y+(d_i\otimes I_n)H,
    \qquad
    \widetilde H
    =
    (a_i\otimes I_n)A_1^\top\widetilde Y
    +(M_i\otimes I_n)H.
\]
\end{enumerate}
\end{theorem}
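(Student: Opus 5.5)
The plan is to plug the data $(f,g)=\mathfrak E_{i-1}^P(X,H)u$ into the explicit resolvent formula from \cite[Lemma~2.4.5]{CurtainZwart1995} recalled above, and to rewrite each term in Takenaka--Malmquist coordinates using the defining relations \eqref{eq:delay-adi-ai}--\eqref{eq:delay-adi-small-data}. Part~(ii) then follows by the same computation applied to the adjoint resolvent formula.

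\emph{Part~(i).} Fix $u\in\C^m$ and set $f=Xu$ and $g(\tau)=(\Psi_{i-1}(-\tau)\otimes I_n)Hu$. The finite-dimensional component $\xi$ satisfies
\[
    \Delta(p_i)\xi=f+A_1\int_{-h}^0e^{p_i(-h-\tau)}g(\tau)\,d\tau .
\]
Substituting $t=-\tau$ turns the integral into
\[
    \int_0^h e^{-p_i(h-t)}\Psi_{i-1}(t)\,dt\otimes I_n
\]
applied to $Hu$, which is exactly $(d_i\otimes I_n)Hu$. Hence $\xi=\widetilde Xu$.

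For the history component, set $t=-\theta\in[0,h]$. The term $e^{p_i\theta}\xi$ becomes $e^{-p_it}\widetilde Xu=(\Psi_i(t)a_i\otimes I_n)\widetilde Xu$ by \eqref{eq:delay-adi-ai}. The integral term is
\[
    \int_\theta^0e^{p_i(\theta-\tau)}g(\tau)\,d\tau .
\]
With $s=-\tau$ it becomes $\int_0^t e^{-p_i(t-s)}\Psi_{i-1}(s)\,ds$ tensored with $I_n$ and applied to $Hu$. By \eqref{eq:delay-adi-Mi} this equals $(\Psi_i(t)M_i\otimes I_n)Hu$.

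To combine the two contributions we use the mixed-product rule
\[
    (\Psi_i(t)c\otimes I_n)=(\Psi_i(t)\otimes I_n)(c\otimes I_n),
\]
valid for column vectors $c$ and matrices alike. This gives $\phi(\theta)=(\Psi_i(-\theta)\otimes I_n)\widetilde Hu$, as claimed. No separate check of $\phi(0)=\xi$ is needed, since the resolvent formula already produces an element of the domain.

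\emph{Part~(ii).} Take $g(\tau)=(\Psi_{i-1}(\tau+h)\otimes I_n)Hv$. The adjoint resolvent formula involves
\[
    \int_{-h}^0 e^{p_i\tau}g(\tau)\,d\tau .
\]
Substituting $t=\tau+h$ gives $\int_0^h e^{-p_i(h-t)}\Psi_{i-1}(t)\,dt$, which is again $d_i$. This yields the equation for $\widetilde Y$; note that no $A_1$ appears here, consistent with the formula. Similarly, with $t=\theta+h$ we get $e^{-p_i(\theta+h)}=e^{-p_it}=\Psi_i(t)a_i$, multiplying $A_1^\top\widetilde Y$. The integral
\[
    \int_{-h}^\theta e^{-p_i(\theta-\tau)}g(\tau)\,d\tau
\]
becomes, with $s=\tau+h$, the convolution $\int_0^t e^{-p_i(t-s)}\Psi_{i-1}(s)\,ds=\Psi_i(t)M_i$. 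The result follows as before.

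\emph{Expected obstacle.} The only nontrivial ingredient is that $a_i$ and $M_i$ exist, i.e.\ that $e^{-p_it}$ and each convolution $e^{-p_i\cdot}*\psi_k$ lie in $\operatorname{span}\{\psi_1,\dots,\psi_i\}$ on $[0,h]$; the paper asserts this before the theorem. If it needs verification, I would pass to the Laplace domain. There $e^{-p_i t}$ has transform $1/(s+p_i)$, and the convolution has transform $\widehat\psi_k(s)/(s+p_i)$. Both are strictly proper rational functions with poles in $\{-p_1,\dots,-p_i\}$ (with multiplicities as in $\prod_{\ell\le i}(s+p_\ell)$). Such functions form an $i$-dimensional space, and this space is spanned by $\widehat\psi_1,\dots,\widehat\psi_i$, which are linearly independent. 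Equality on $\R_+$ then implies equality on $[0,h]$, and uniqueness of the coefficients follows from the linear independence of the restrictions.
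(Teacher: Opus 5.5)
Your proposal is correct and follows the paper's proof essentially line by line. You substitute $\mathfrak E_{i-1}^P(X,H)u$ and $\mathfrak E_{i-1}^Q(Y,H)v$ into the explicit resolvent formulas, change variables ($t=-\tau$ and $t=-\theta$ in (i), $t=\tau+h$ and $t=\theta+h$ in (ii)), and identify the resulting terms with $d_i$, $a_i$, $M_i$ via \eqref{eq:delay-adi-ai}--\eqref{eq:delay-adi-small-data}; your Laplace-domain check that $1/(s+p_i)$ and $\widehat\psi_k(s)/(s+p_i)$, $k\le i-1$, lie in the span of $\widehat\psi_1,\ldots,\widehat\psi_i$ soundly justifies, including for repeated shifts, the span claim the paper only asserts before the theorem.
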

\begin{proof}
For (i), the first component of the resolvent formula gives
\[
    \Delta(p_i)\widetilde X
    =
    X+
    A_1
    \int_{-h}^0
        e^{p_i(-h-\tau)}
        \bigl(\Psi_{i-1}(-\tau)\otimes I_n\bigr)H
    \,d\tau.
\]
With $t=-\tau$, the integral equals
$(d_i\otimes I_n)H$. The history component is
\[
    e^{p_i\theta}\widetilde X
    +
    \int_\theta^0
        e^{p_i(\theta-\tau)}
        \bigl(\Psi_{i-1}(-\tau)\otimes I_n\bigr)H
    \,d\tau,
\]
which, after setting $t=-\theta$ and using
\eqref{eq:delay-adi-ai}--\eqref{eq:delay-adi-Mi}, yields the stated expression for
$\widetilde H$.

For (ii), the first component of the adjoint resolvent formula gives
\[
    \Delta(p_i)^\top\widetilde Y
    =
    Y+
    \int_{-h}^0
        e^{p_i\tau}
        \bigl(\Psi_{i-1}(\tau+h)\otimes I_n\bigr)H
    \,d\tau.
\]
The change of variables $t=\tau+h$ turns the integral into
$(d_i\otimes I_n)H$. The corresponding history formula and
\eqref{eq:delay-adi-ai}--\eqref{eq:delay-adi-Mi} then give
\[
    \widetilde H
    =
    (a_i\otimes I_n)A_1^\top\widetilde Y
    +(M_i\otimes I_n)H.
\]
\end{proof}

The first shifted resolvent evaluations are obtained directly from the
resolvent formulas. Namely,
\begin{align*}
    (p_1I-\mathcal A)^{-1}\mathcal B
    &=
    \mathfrak E_1^P
    \bigl(X_1,\gamma_1^{-1}X_1\bigr),
&    \Delta(p_1)X_1&=B,\\
    (p_1I-\mathcal A^*)^{-1}\mathcal C^*
    &=
    \mathfrak E_1^Q
    \bigl(Y_1,\gamma_1^{-1}A_1^\top Y_1\bigr),
    &\Delta(p_1)^\top Y_1&=C^\top.
\end{align*}
Together with \Cref{thm:delay-resolvent-tm}, these formulas give exact
finite-dimensional coordinates for all shifted resolvents in
\eqref{eq:adi-first-step}--\eqref{eq:adi-recursion-V}; when the basis is enlarged,
previous history coefficients are retained by appending a zero block. We
next transform this complex representation into the real block form used in
the implementation.

\subsection{Realification of conjugate shift pairs}
\label{sec:adi-realification}

For real system data, nonreal shifts can be processed in conjugate pairs so
that the ADI iteration remains in real arithmetic, as in finite-dimensional
low-rank ADI \cite{BennerKuerschnerSaak2013}. Here this realification is
performed directly on the Takenaka--Malmquist system.

Suppose that all shifts preceding $p_k$ consist of real shifts and complete
complex conjugate pairs, and let
$p_k=\zeta=a+\mathrm{i}b$,
$p_{k+1}=\overline\zeta$,
$a>0,\quad b>0$.
With $\gamma=\sqrt{2a}$, the corresponding Takenaka--Malmquist functions
satisfy
\[
    \widehat\psi_k(s)
    =
    \frac{\gamma B_{k-1}(s)}{s+\zeta},
    \qquad
    \widehat\psi_{k+1}(s)
    =
    \frac{\gamma B_{k-1}(s)(s-\overline \zeta)}
         {(s+\zeta)(s+\overline \zeta)},
\]
where
\[
    B_{k-1}(s)
    =
    \prod_{\ell=1}^{k-1}
    \frac{s-\overline{p_\ell}}{s+p_\ell}.
\]
By the assumed ordering of the preceding shifts, $B_{k-1}$ has real
coefficients. Set $\rho=\zeta/|\zeta|$ and define
\[
    \mathsf T_\zeta
    :=
    \frac{1}{\sqrt{2}}
    \begin{bmatrix}
        1 & -\rho\\
        1 &  \rho
    \end{bmatrix}.
\]
Then $\mathsf T_\zeta$ is unitary and
\begin{equation}
    \begin{bmatrix}
        \chi_k&\chi_{k+1}
    \end{bmatrix}
    :=
    \begin{bmatrix}
        \psi_k&\psi_{k+1}
    \end{bmatrix}
    \mathsf T_\zeta
    \label{eq:real-tm-transformation}
\end{equation}
is real-valued. Indeed,
\[
    \widehat\chi_k(s)
    =
    \frac{\sqrt{2}\gamma s B_{k-1}(s)}
         {s^2+2as+|\zeta|^2},
    \qquad
    \widehat\chi_{k+1}(s)
    =
    -\frac{\sqrt{2}\gamma |\zeta| B_{k-1}(s)}
          {s^2+2as+|\zeta|^2}.
\]
Since \eqref{eq:real-tm-transformation} is unitary,
$\chi_k$ and $\chi_{k+1}$ are orthonormal and span the same subspace as
$\psi_k$ and $\psi_{k+1}$.

For a real shift, the corresponding Takenaka--Malmquist function is left
unchanged. We then group the shifts into blocks $\pi_1,\ldots,\pi_\nu$, where each block is either a
real shift $p>0$ or a conjugate pair $(\zeta,\overline\zeta)$ with
$\zeta=a+\mathrm{i}b$, $a,b>0$. Associate with each block the real matrices
\begin{equation}
    (K_\ell,L_\ell)
    =
    \begin{cases}
    \left(
        [p],
        [\sqrt{2p}]
    \right),
    & \pi_\ell=p>0,
    \\[3mm]
    \left(
        \begin{bmatrix}
            2a & |\zeta|\\
            -|\zeta| & 0
        \end{bmatrix},
        \begin{bmatrix}
            2\sqrt a&0
        \end{bmatrix}
    \right),
    & \pi_\ell=(\zeta,\overline\zeta).
    \end{cases}
    \label{eq:real-tm-blocks}
\end{equation}
Thus $K_\ell$ has dimension one for a real shift and dimension two for a
conjugate pair.

Let $\delta_\ell$ denote the dimension of the $\ell$th block and
$q_\ell=\delta_1+\cdots+\delta_\ell$. Recursively define
\begin{equation}
    K^{[\ell]}
    =
    \begin{bmatrix}
        K^{[\ell-1]}
        &
        (L^{[\ell-1]})^\top L_\ell
        \\
        0
        &
        K_\ell
    \end{bmatrix},
    \qquad
    L^{[\ell]}
    =
    \begin{bmatrix}
        L^{[\ell-1]}&L_\ell
    \end{bmatrix}.
    \label{eq:real-tm-global-matrices}
\end{equation}
For the first block, $K^{[1]}=K_1$ and $L^{[1]}=L_1$.

\begin{proposition}
\label{prop:real-tm-system}
Let the shift sequence consist of real shifts and complete conjugate pairs,
ordered blockwise as above. Then the Takenaka--Malmquist subspace generated
by the first $q_\ell$ shifts admits a real orthonormal basis
\[
    \Theta^{[\ell]}(t)
    =
    \begin{bmatrix}
        \chi_1(t)&\cdots&\chi_{q_\ell}(t)
    \end{bmatrix}
\]
satisfying
\begin{equation}
    \frac{d}{dt}\Theta^{[\ell]}(t)
    =
    -\Theta^{[\ell]}(t)K^{[\ell]},
    \qquad
    \Theta^{[\ell]}(0)=L^{[\ell]}.
    \label{eq:real-tm-dynamics}
\end{equation}
In particular,
\[
    \Theta^{[\ell]}(t)
    =
    L^{[\ell]}e^{-tK^{[\ell]}}.
\]
The subspace spanned by $\Theta^{[\ell]}$ coincides with that generated by
the original complex Takenaka--Malmquist functions associated with the
same shifts.
\end{proposition}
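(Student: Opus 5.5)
The plan is to transport the complex Takenaka--Malmquist realization \eqref{eq:tm-matrix-realization} to the new basis by a block-diagonal unitary change of coordinates, and then identify the transformed matrices with $K^{[\ell]}$ and $L^{[\ell]}$.

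\textbf{Setting up the change of basis.} Set $j=q_\ell$ and define the block-diagonal matrix $\mathsf T=\diag(\mathsf T_1,\ldots,\mathsf T_\ell)\in\C^{j\times j}$. Here $\mathsf T_i=[1]$ when $\pi_i$ is a real shift, and $\mathsf T_i=\mathsf T_\zeta$ when $\pi_i=(\zeta,\overline\zeta)$. Then set $\Theta^{[\ell]}:=\Psi_j\mathsf T$.
\begin{itemize}
\item Since $\mathsf T$ is unitary, the columns of $\Theta^{[\ell]}$ are orthonormal in $L^2(\R_+)$ and span the same subspace as $\psi_1,\ldots,\psi_j$. This gives the last assertion.
\item Realness of each $\chi$ was already verified before the proposition, using the explicit Laplace transforms $\widehat\chi_k,\widehat\chi_{k+1}$ and the fact that $B_{k-1}$ has real coefficients. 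That argument applies at every block, because the preceding shifts are complete pairs or real.
\item Differentiating \eqref{eq:tm-matrix-realization} gives $\frac{d}{dt}\Theta^{[\ell]}=-\Theta^{[\ell]}\,\mathsf T^*\mathsf K_j\mathsf T$ and $\Theta^{[\ell]}(0)=\mathsf L_j\mathsf T$.
\end{itemize}
It therefore suffices to show $\mathsf L_j\mathsf T=L^{[\ell]}$ and $\mathsf T^*\mathsf K_j\mathsf T=K^{[\ell]}$. The explicit formula $\Theta^{[\ell]}(t)=L^{[\ell]}e^{-tK^{[\ell]}}$ then follows by solving the linear ODE.

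\textbf{The initial value.} Since $\mathsf L_j$ is a row vector and $\mathsf T$ is block-diagonal, $\mathsf L_j\mathsf T$ is computed blockwise. For a real shift the block is unchanged, namely $\sqrt{2p}$. For a pair, $[\gamma\ \gamma]\mathsf T_\zeta=\tfrac{\gamma}{\sqrt2}[2,\ 0]=[2\sqrt a,\ 0]$, since $\gamma=\sqrt{2a}$. This is exactly the concatenation $L^{[\ell]}$ in \eqref{eq:real-tm-global-matrices}.

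\textbf{The system matrix: diagonal blocks.} I would argue by induction on $\ell$, using the block structure of $\mathsf K_j$. Its diagonal blocks are $[p]$ or $\begin{bmatrix}\zeta&\gamma^2\\0&\overline\zeta\end{bmatrix}$, and its strictly upper off-diagonal blocks are $\mathsf L_{\rm prev}^\top\mathsf L_{\rm cur}$, the rank-one matrices with entries $\gamma_r\gamma_s$. For a real shift the diagonal block is unchanged. For a pair I would compute $\mathsf T_\zeta^*\begin{bmatrix}\zeta&2a\\0&\overline\zeta\end{bmatrix}\mathsf T_\zeta$ directly, using $\rho=\zeta/|\zeta|$ and $\overline\rho\zeta=|\zeta|$. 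The target is $\begin{bmatrix}2a&|\zeta|\\-|\zeta|&0\end{bmatrix}$. As a cross-check, $L_i(sI+K_i)^{-1}$ should reproduce the stated transforms $\widehat\chi_k,\widehat\chi_{k+1}$ up to the factor $B_{k-1}$. This $2\times2$ computation is routine but sign-sensitive, and I expect it to be the main place where errors could occur.

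\textbf{The system matrix: off-diagonal blocks.} Transforming the block $\mathsf L_{\rm prev}^\top\mathsf L_{\rm cur}$ gives $\mathsf T_{\rm prev}^*\mathsf L_{\rm prev}^\top\mathsf L_{\rm cur}\mathsf T_{\rm cur}=(\mathsf L_{\rm prev}\overline{\mathsf T}_{\rm prev})^\top(\mathsf L_{\rm cur}\mathsf T_{\rm cur})$.

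The subtle point is the conjugate transpose. Because $\mathsf L_{\rm prev}$ is real and $\mathsf L_{\rm prev}\mathsf T_{\rm prev}=L^{[\ell-1]}$ is real, we have $\mathsf L_{\rm prev}\overline{\mathsf T}_{\rm prev}=\overline{L^{[\ell-1]}}=L^{[\ell-1]}$. The block therefore equals $(L^{[\ell-1]})^\top L_\ell$, as required in \eqref{eq:real-tm-global-matrices}. Combined with the diagonal blocks, this completes the induction.
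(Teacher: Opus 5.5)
Your proposal is correct and follows essentially the same route as the paper. You apply the blockwise unitary change of basis $\Theta^{[\ell]}=\Psi_{q_\ell}\mathsf T$ to the complex realization, compute the pair block via $\mathsf T_\zeta^*\mathsf K\mathsf T_\zeta$ and the initial row via $[\gamma\ \gamma]\mathsf T_\zeta$, and reduce the off-diagonal blocks to $(L^{[\ell-1]})^\top L_\ell$; your explicit handling of the conjugate, $\mathsf T_{\rm prev}^*\mathsf L_{\rm prev}^\top=(\mathsf L_{\rm prev}\overline{\mathsf T}_{\rm prev})^\top$, spells out a step the paper only asserts, and realness of $\Theta^{[\ell]}$ also follows directly from $\Theta^{[\ell]}(t)=L^{[\ell]}e^{-tK^{[\ell]}}$ with real data.
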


\begin{proof}
For a real shift there is nothing to show. For a conjugate pair,
\eqref{eq:real-tm-transformation} is a unitary change of basis and produces
two real-valued orthonormal functions. Applying these transformations blockwise to
\eqref{eq:tm-matrix-realization} yields
\eqref{eq:real-tm-global-matrices}--\eqref{eq:real-tm-dynamics}. The diagonal block
associated with a conjugate pair follows from
\[
    \mathsf T_\zeta^*
    \begin{bmatrix}
        \zeta&2a\\
        0&\overline\zeta
    \end{bmatrix}
    \mathsf T_\zeta
    =
    \begin{bmatrix}
        2a&|\zeta|\\
        -|\zeta|&0
    \end{bmatrix}
\]
whereas
\[
    \begin{bmatrix}\gamma&\gamma\end{bmatrix}\mathsf T_\zeta
    =
    \begin{bmatrix}\sqrt{2}\gamma&0\end{bmatrix}.
\]
The off-diagonal blocks transform into
$(L^{[\ell-1]})^\top L_\ell$, which gives
\eqref{eq:real-tm-global-matrices}.
\end{proof}

For a conjugate pair, the matrix exponential required below is itself
available entirely in real arithmetic. If
$K_\ell$ is the $2\times2$ block in
\eqref{eq:real-tm-blocks}, then
\[
    e^{-hK_\ell}
    =
    e^{-ah}
    \left(
        \cos(bh)I_2
        -
        \frac{\sin(bh)}{b}
        (K_\ell-aI_2)
    \right).
\]
Thus no complex arithmetic is required after the shifts have been grouped
into real blocks.

\subsection{Real block ADI}
\label{sec:adi-real-block}

We now use the real Takenaka--Malmquist basis from
\Cref{sec:adi-realification} to obtain fully real implementations of the
reachability and observability ADI iterations. Since the change from the
complex Takenaka--Malmquist basis to $\Theta^{[\ell]}$ is unitary, the
finite-rank Gramian approximations are unchanged.

For the reachability iteration, set
$K_P^{[\ell]}=K^{[\ell]}\otimes I_m$,
$L_P^{[\ell]}=L^{[\ell]}\otimes I_m$.
The corresponding ADI factor has the representation
\begin{equation}
    R^{[\ell]}v
    =
    \begin{pmatrix}
        X^{[\ell]}v\\
        \theta\mapsto
        X^{[\ell]}e^{\theta K_P^{[\ell]}}v
    \end{pmatrix},
    \qquad
    v\in\C^{mq_\ell},
    \label{eq:real-reachability-factor}
\end{equation}
where $X^{[\ell]}\in\R^{n\times mq_\ell}$ satisfies
\[
    A_0X^{[\ell]}
    +
    A_1X^{[\ell]}e^{-hK_P^{[\ell]}}
    -
    X^{[\ell]}K_P^{[\ell]}
    =
    -BL_P^{[\ell]}.
\]
Likewise, with
$K_Q^{[\ell]}=K^{[\ell]}\otimes I_p$,
$L_Q^{[\ell]}=L^{[\ell]}\otimes I_p$,
the observability factor is
\begin{equation}
    S^{[\ell]}v
    =
    \begin{pmatrix}
        Y^{[\ell]}v\\
        \theta\mapsto
        A_1^\top Y^{[\ell]}
        e^{-(\theta+h)K_Q^{[\ell]}}v
    \end{pmatrix},
    \qquad
    v\in\C^{pq_\ell},
    \label{eq:real-observability-factor}
\end{equation}
where
\[
    A_0^\top Y^{[\ell]}
    +
    A_1^\top Y^{[\ell]}e^{-hK_Q^{[\ell]}}
    -
    Y^{[\ell]}K_Q^{[\ell]}
    =
    -C^\top L_Q^{[\ell]}.
\]
Equivalently, the factor representations satisfy
\begin{equation}
    \mathcal A R^{[\ell]}
    =
    R^{[\ell]}K_P^{[\ell]}-\mathcal B L_P^{[\ell]},
    \qquad
    \mathcal A^* S^{[\ell]}
    =
    S^{[\ell]}K_Q^{[\ell]}-\mathcal C^* L_Q^{[\ell]}.
    \label{eq:real-block-operator-identities}
\end{equation}
These identities follow directly from
\eqref{eq:real-reachability-factor}--\eqref{eq:real-observability-factor} and the
two matrix equations above; the boundary conditions at $\theta=0$ and
$\theta=-h$, respectively, are satisfied by construction. Thus no
approximation beyond the finite-rank ADI truncation has been introduced.

The equations can be solved blockwise. Set
$G_\ell:=(L^{[\ell-1]})^\top L_\ell$
and partition
\[
    e^{-hK^{[\ell]}}
    =
    \begin{bmatrix}
        e^{-hK^{[\ell-1]}}&F_\ell\\
        0&E_\ell
    \end{bmatrix},
    \qquad
    E_\ell=e^{-hK_\ell}.
\]
For $\ell=1$, all terms involving the preceding blocks are omitted.

Writing
\[
    X^{[\ell]}
    =
    \begin{bmatrix}
        X^{[\ell-1]}&X_\ell
    \end{bmatrix},
    \qquad
    Y^{[\ell]}
    =
    \begin{bmatrix}
        Y^{[\ell-1]}&Y_\ell
    \end{bmatrix},
\]
with
$X_\ell\in\R^{n\times m\delta_\ell}$ and
$Y_\ell\in\R^{n\times p\delta_\ell}$, the new reachability block is determined by
\begin{align}
    A_0X_\ell
    +A_1X_\ell(E_\ell\otimes I_m)
    -X_\ell(K_\ell\otimes I_m)
    &=
    -B(L_\ell\otimes I_m)
    \notag\\
    &\quad
    -A_1X^{[\ell-1]}(F_\ell\otimes I_m)
    +X^{[\ell-1]}(G_\ell\otimes I_m),
    \label{eq:real-block-reachability-solve}
\end{align}
whereas the new observability block satisfies
\begin{align*}
    A_0^\top Y_\ell
    +A_1^\top Y_\ell(E_\ell\otimes I_p)
    -Y_\ell(K_\ell\otimes I_p)
    &=
    -C^\top(L_\ell\otimes I_p)
    \notag\\
    &\quad
    -A_1^\top Y^{[\ell-1]}(F_\ell\otimes I_p)
    +Y^{[\ell-1]}(G_\ell\otimes I_p).
\end{align*}
Denote the right-hand sides of the reachability and observability block
equations above by $F_\ell^P$ and $F_\ell^Q$, respectively. For a real
shift, \eqref{eq:real-block-reachability-solve} then reduces to
\[
    \Delta(p)X_\ell=-F_\ell^P,
\]
and analogously to
$\Delta(p)^\top Y_\ell=-F_\ell^Q$ for observability.
For a conjugate pair, write
$E_\ell=[e_{rs}]_{r,s=1}^2$ and
$K_\ell=[k_{rs}]_{r,s=1}^2$. Then an equation of the form
\[
        A_0Z+A_1ZE_\ell-ZK_\ell=F,
    \qquad
    Z=\begin{bmatrix}Z_1&Z_2\end{bmatrix},
    \quad
    F=\begin{bmatrix}F_1&F_2\end{bmatrix},
\]
is equivalent to the real coupled system
\begin{equation}
    \begin{bmatrix}
        A_0+e_{11}A_1-k_{11}I_n
        &
        e_{21}A_1-k_{21}I_n
        \\
        e_{12}A_1-k_{12}I_n
        &
        A_0+e_{22}A_1-k_{22}I_n
    \end{bmatrix}
    \begin{bmatrix}
        Z_1\\Z_2
    \end{bmatrix}
    =
    \begin{bmatrix}
        F_1\\F_2
    \end{bmatrix}.
    \label{eq:real-pair-linear-system}
\end{equation}
Hence a conjugate pair is processed by a single real $2n\times2n$ block
system. The observability equation has the same form with
$A_0,A_1$ replaced by $A_0^\top,A_1^\top$.

The resulting blockwise procedures are summarized below.

\begin{algorithm}
\caption{Reachability block ADI}
\label{alg:real-adi-P}
\begin{algorithmic}[1]
\Require $A_0,A_1,B,h$ and shift blocks
$\pi_1,\ldots,\pi_\nu$
\Ensure Real factor data $X^{[\nu]}$, $K^{[\nu]}$, $L^{[\nu]}$

\State $X^{[0]}\gets[\,]$, $K^{[0]}\gets[\,]$, $L^{[0]}\gets[\,]$

\For{$\ell=1,\ldots,\nu$}

    \If{$\pi_\ell=p>0$}
        \State $K_\ell\gets[p]$,
        $L_\ell\gets[\sqrt{2p}]$
    \Else
        \State Write $\pi_\ell=(\zeta,\overline\zeta)$ with
        $\zeta=a+\mathrm{i}b$, $a,b>0$
        \State $K_\ell\gets
        \begin{bmatrix}2a&|\zeta|\\-|\zeta|&0\end{bmatrix}$,
        $L_\ell\gets
        [\,2\sqrt a\;\;0\,]$
    \EndIf

    \State $G_\ell\gets(L^{[\ell-1]})^\top L_\ell$

    \State Assemble
    $K^{[\ell]}\gets
    \begin{bmatrix}
        K^{[\ell-1]}&G_\ell\\
        0&K_\ell
    \end{bmatrix}$
    and
    $L^{[\ell]}\gets[\,L^{[\ell-1]}\;\;L_\ell\,]$

    \State Obtain $F_\ell,E_\ell$ from
    $e^{-hK^{[\ell]}}
    =
    \begin{bmatrix}
        e^{-hK^{[\ell-1]}}&F_\ell\\
        0&E_\ell
    \end{bmatrix}$

    \State $F_\ell^P\gets
    -B(L_\ell\otimes I_m)
    -A_1X^{[\ell-1]}(F_\ell\otimes I_m)
    +X^{[\ell-1]}(G_\ell\otimes I_m)$

    \If{$\pi_\ell=p>0$}
        \State Solve $\Delta(p)X_\ell=-F_\ell^P$
    \Else
        \State Solve \eqref{eq:real-pair-linear-system} with
        $F=F_\ell^P$ for
        $X_\ell=[\,X_{\ell,1}\;\;X_{\ell,2}\,]$
    \EndIf

    \State $X^{[\ell]}\gets[\,X^{[\ell-1]}\;\;X_\ell\,]$

\EndFor

\State \Return $X^{[\nu]}$, $K^{[\nu]}$, $L^{[\nu]}$
\end{algorithmic}
\end{algorithm}

\begin{algorithm}
\caption{Observability block ADI}
\label{alg:real-adi-Q}
\begin{algorithmic}[1]
\Require $A_0,A_1,C,h$ and shift blocks
$\pi_1,\ldots,\pi_\nu$
\Ensure Real factor data $Y^{[\nu]}$, $K^{[\nu]}$, $L^{[\nu]}$

\State $Y^{[0]}\gets[\,]$, $K^{[0]}\gets[\,]$, $L^{[0]}\gets[\,]$

\For{$\ell=1,\ldots,\nu$}

    \If{$\pi_\ell=p>0$}
        \State $K_\ell\gets[p]$,
        $L_\ell\gets[\sqrt{2p}]$
    \Else
        \State Write $\pi_\ell=(\zeta,\overline\zeta)$ with
        $\zeta=a+\mathrm{i}b$, $a,b>0$
        \State $K_\ell\gets
        \begin{bmatrix}2a&|\zeta|\\-|\zeta|&0\end{bmatrix}$,
        $L_\ell\gets
        [\,2\sqrt a\;\;0\,]$
    \EndIf

    \State $G_\ell\gets(L^{[\ell-1]})^\top L_\ell$

    \State Assemble
    $K^{[\ell]}\gets
    \begin{bmatrix}
        K^{[\ell-1]}&G_\ell\\
        0&K_\ell
    \end{bmatrix}$
    and
    $L^{[\ell]}\gets[\,L^{[\ell-1]}\;\;L_\ell\,]$

    \State Obtain $F_\ell,E_\ell$ from
    $e^{-hK^{[\ell]}}
    =
    \begin{bmatrix}
        e^{-hK^{[\ell-1]}}&F_\ell\\
        0&E_\ell
    \end{bmatrix}$

    \State $F_\ell^Q\gets
    -C^\top(L_\ell\otimes I_p)
    -A_1^\top Y^{[\ell-1]}(F_\ell\otimes I_p)
    +Y^{[\ell-1]}(G_\ell\otimes I_p)$

    \If{$\pi_\ell=p>0$}
        \State Solve $\Delta(p)^\top Y_\ell=-F_\ell^Q$
    \Else
        \State Solve \eqref{eq:real-pair-linear-system} with
        $A_0,A_1,F$ replaced by
        $A_0^\top,A_1^\top,F_\ell^Q$ for
        $Y_\ell=[\,Y_{\ell,1}\;\;Y_{\ell,2}\,]$
    \EndIf

    \State $Y^{[\ell]}\gets[\,Y^{[\ell-1]}\;\;Y_\ell\,]$

\EndFor

\State \Return $Y^{[\nu]}$, $K^{[\nu]}$, $L^{[\nu]}$
\end{algorithmic}
\end{algorithm}

The preceding construction yields an exact real-arithmetic realization of
the operator ADI iteration.

\begin{theorem}
\label{thm:real-block-adi}
Assume that the delay system \eqref{eq:dde} is exponentially stable and
that the ADI shift sequence is invariant under complex conjugation and
ordered in real shifts and complete conjugate pairs. Let
$R_{q_\ell}$ and $S_{q_\ell}$ denote the reachability and observability
factors obtained after the first $q_\ell$ scalar shifts by the complex
operator ADI iteration, and let $R^{[\ell]}$ and $S^{[\ell]}$ be the
factors generated by the real block construction above.

Then there exist unitary matrices
$U_P^{[\ell]}\in\C^{mq_\ell\times mq_\ell}$,
$U_Q^{[\ell]}\in\C^{pq_\ell\times pq_\ell}$
such that
$R^{[\ell]}
    =
    R_{q_\ell}U_P^{[\ell]}$,
$S^{[\ell]}
    =
    S_{q_\ell}U_Q^{[\ell]}$.
Consequently,
$R^{[\ell]}(R^{[\ell]})^*
    =
    R_{q_\ell}R_{q_\ell}^*$,
$S^{[\ell]}(S^{[\ell]})^*
    =
    S_{q_\ell}S_{q_\ell}^*$.
Thus the real block iteration produces exactly the same finite-rank
reachability and observability Gramian approximations as the complex
operator ADI iteration. Each real shift requires an $n\times n$ real
linear solve, whereas each nonreal conjugate pair requires a single real
$2n\times2n$ coupled solve.
\end{theorem}

\begin{proof}
By \Cref{prop:real-tm-system}, the real Takenaka--Malmquist basis is
obtained from the complex one by a blockwise unitary change of basis. Tensoring these
transformations with $I_m$ and $I_p$ yields the unitary matrices
$U_P^{[\ell]}$ and $U_Q^{[\ell]}$, respectively. Since the ADI factors
are the restrictions of the reachability and observability operators to
the corresponding Takenaka--Malmquist subspaces, the stated factor
relations follow. The identities for the Gramian approximations are then
immediate. The dimensions of the required real linear systems follow from
\eqref{eq:real-block-reachability-solve}--\eqref{eq:real-pair-linear-system} and
the analogous observability equations.
\end{proof}

\subsection{Computational cost and storage}
\label{sec:adi-complexity}

Let $\nu_1$ and $\nu_2$ denote the numbers of real and conjugate-pair
shift blocks, respectively, and let
$q=\nu_1+2\nu_2$
be the total number of scalar shifts. For a prescribed shift sequence, the
large-scale computational work of the ADI iteration is concentrated in the
linear systems in
\Cref{alg:real-adi-P,alg:real-adi-Q}. A real shift requires primal and
dual solves of order $n$ with $m$ and $p$ right-hand sides, respectively,
whereas a conjugate pair requires the corresponding real coupled systems
of order $2n$. Repeated shift blocks lead to identical coefficient
matrices, so their factorizations may in principle be reused.

Apart from operations involving the factor arrays $X$ and $Y$, the
remaining dense computations associated with the shift representation
are performed in the ADI dimension $q$. After $q$ scalar shifts, the
factors are represented by
$X\in\R^{n\times mq}$,
$Y\in\R^{n\times pq}$,
$K\in\R^{q\times q}$,
$L\in\R^{1\times q}$.
Their storage therefore requires
\[
    \mathcal O\bigl(nq(m+p)+q^2\bigr)
\]
numbers, apart from the storage required by the linear solver. In
particular, no discretization of the delay interval is stored.

Since $K^{[\ell]}$ is block upper triangular,
\[
    e^{-hK^{[\ell]}}
    =
    \begin{bmatrix}
        e^{-hK^{[\ell-1]}} & F_\ell\\
        0 & e^{-hK_\ell}
    \end{bmatrix},
\]
the required matrix-exponential updates are carried out in the
$q$-dimensional shift space. Their cost therefore depends on the number of
accumulated shifts $q$, but not on the dimension $n$ of the instantaneous
state.

\subsection{Shift parameter selection}
\label{sec:adi-shifts}

The non-Blaschke condition \eqref{eq:non-blaschke} guarantees asymptotic
convergence but says nothing about its rate. As in finite-dimensional
Lyapunov ADI, effective shifts should reflect spectral information; see,
e.g., \cite{Penzl2000,Wachspress2013,BennerKuerschnerSaak2014}. For the
delay generator this information is provided by the characteristic roots,
the zeros of $\det\Delta(\cdot)$.

We first obtain candidate approximations to rightmost characteristic roots
from a finite-order Chebyshev pseudospectral discretization of the generator
\cite{BredaMasetVermiglio2005,WuMichiels2012}. The resulting eigenvalues
are used only as initial guesses and are refined directly for the nonlinear
characteristic equation, following \cite{WuMichiels2012}. If $z$ is an approximate root and $v,w\in\C^n$ are unit right and left
singular vectors corresponding to the smallest singular value of
$\Delta(z)$, we apply the local correction
\[
    z_{\rm new}
    =
    z-
    \frac{w^*\Delta(z)v}{w^*\Delta'(z)v},
    \qquad
    \Delta'(z)=I+hA_1e^{-zh},
\]
provided that the denominator is nonzero, and iterate until
$\sigma_{\min}(\Delta(z))$ is sufficiently small.

Let $\lambda_1,\ldots,\lambda_{N_\lambda}$ be the retained refined root
candidates, ordered from right to left and closed under complex
conjugation. A scalar ADI shift
is obtained by reflection at the imaginary axis,
$p=-\overline\lambda\in\C_+$.
Real roots give scalar shift blocks, while a conjugate pair is represented
by $\pi=(\zeta,\overline\zeta)$ with
$\zeta=-\overline\lambda$ and $\Im\zeta>0$, as in
\Cref{sec:adi-realification}. The same blocks are used for reachability and
observability, since $\mathcal A$ and $\mathcal A^*$ have the same spectrum
by \eqref{eq:dde-spectra}.

A fixed cyclic shift sequence satisfies the non-Blaschke condition, but may
converge slowly after its spectral region has been resolved. We therefore
process successive bands of characteristic roots, starting from the
rightmost ones and enlarging the spectral range as the iteration proceeds.
If the available shift blocks are exhausted before convergence, the
pseudospectral search region is enlarged and further candidates are
localized and refined.
For the asymptotic statements, any finite staged prefix can be continued by
cyclically repeating a nonempty set of available blocks. This leaves the
computed prefix unchanged and yields an infinite shift sequence satisfying
the non-Blaschke condition.

\section{Balanced truncation without delay discretization}
\label{sec:realization}

We now combine the real ADI factors of
\Cref{thm:real-block-adi} with square-root balanced truncation. The
quantities
$(S^{[\nu]})^*R^{[\nu]}$,
$(S^{[\nu]})^*\mathcal A R^{[\nu]}$,
$(S^{[\nu]})^*\mathcal B$,
$\mathcal C R^{[\nu]}$
are obtained directly from the finite-dimensional data returned by
\Cref{alg:real-adi-P,alg:real-adi-Q}, without discretizing the delay
interval.

\subsection{Computation of the reduced model}
\label{sec:realization-rom}

Suppose that $\nu$ shift blocks have been processed and let
$q=q_\nu$ denote the total number of corresponding
Takenaka--Malmquist functions. Since the same shift blocks are used for
the reachability and observability iterations, the matrices
$K:=K^{[\nu]}\in\R^{q\times q}$,
$L:=L^{[\nu]}\in\R^{1\times q}$
are common to both factors. Let
$X:=X^{[\nu]}\in\R^{n\times mq}$,
$Y:=Y^{[\nu]}\in\R^{n\times pq}$
be the coefficient matrices returned by
\Cref{alg:real-adi-P,alg:real-adi-Q}, respectively. All shift information
required for the postprocessing is encoded in the common matrices $K$ and
$L$. Set
$K_P:=K\otimes I_m$,
$K_Q:=K\otimes I_p$,
$L_P:=L\otimes I_m$.
With $\ell=\nu$, the factor representations
\eqref{eq:real-reachability-factor}--\eqref{eq:real-observability-factor}
take the form
\[
    R^{[\nu]}v
    =
    \begin{pmatrix}
        Xv\\
        \theta\mapsto Xe^{\theta K_P}v
    \end{pmatrix},
    \qquad
    S^{[\nu]}w
    =
    \begin{pmatrix}
        Yw\\
        \theta\mapsto
        A_1^\top Ye^{-(\theta+h)K_Q}w
    \end{pmatrix}.
\]
We first compute the cross product $(S^{[\nu]})^*R^{[\nu]}$. Set
$D:=Y^\top A_1X
    \in\R^{pq\times mq}$.
Using these representations, the contribution of the history components is
\[
    \Xi_h
    =
    \int_0^h
        e^{-tK_Q^\top}
        D
        e^{-(h-t)K_P}\,dt.
\]
For $\alpha=1,\ldots,p$ and $\beta=1,\ldots,m$, define
$D^{\alpha\beta}\in\R^{q\times q}$ by
\begin{equation}
    (D^{\alpha\beta})_{ij}
    :=
    D_{(i-1)p+\alpha,\,(j-1)m+\beta},
    \qquad i,j=1,\ldots,q.
    \label{eq:channel-block-D}
\end{equation}
The standard block-exponential construction
\cite{VanLoan1978} gives
\begin{equation}
    \exp\left(
        h
        \begin{bmatrix}
            -K^\top&D^{\alpha\beta}\\
            0&-K
        \end{bmatrix}
    \right)
    =
    \begin{bmatrix}
        e^{-hK^\top}&\Xi^{\alpha\beta}\\
        0&e^{-hK}
    \end{bmatrix},
    \label{eq:channel-block-exponential}
\end{equation}
where each block has size $q\times q$, and $\Xi_h$ is assembled according to
\begin{equation}
    (\Xi_h)_{(i-1)p+\alpha,\,(j-1)m+\beta}
    =
    (\Xi^{\alpha\beta})_{ij}.
    \label{eq:assemble-Xi}
\end{equation}
Consequently,
\[
    M
    :=
    (S^{[\nu]})^*R^{[\nu]}
    =
    Y^\top X+\Xi_h.
\]
The remaining products follow directly from the state components of the
two factors. In particular,
\[
    B_{\rm f}
    :=
    (S^{[\nu]})^*\mathcal B
    =
    Y^\top B,
    \qquad
    C_{\rm f}
    :=
    \mathcal C R^{[\nu]}
    =
    CX.
\]
Using \eqref{eq:real-block-operator-identities},
\[
    F
    :=
    (S^{[\nu]})^*\mathcal A R^{[\nu]}
    =
    MK_P-B_{\rm f}L_P.
\]
Let
$M
    =
    U\widehat\Sigma V^\top$
be a singular value decomposition, and let
$U_r$, $V_r$, and
\[
    \widehat\Sigma_r
    =
    \diag(\widehat\sigma_1,\ldots,\widehat\sigma_r)
\]
contain the leading $r$ singular triplets. The singular values $\widehat\sigma_i$ approximate the Hankel singular
values of the original system. Inserting these factors into the square-root
balancing formulas of \Cref{sec:bt:adi-bt} gives
\[
    A_r
    =
    \widehat\Sigma_r^{-1/2}
    U_r^\top F V_r
    \widehat\Sigma_r^{-1/2},
    \qquad
    B_r
    =
    \widehat\Sigma_r^{-1/2}
    U_r^\top B_{\rm f},
    \qquad
    C_r
    =
    C_{\rm f}V_r
    \widehat\Sigma_r^{-1/2}.
\]
Hence these matrices
define the reduced model \eqref{eq:reduced-ode} entirely from the real
finite-dimensional data generated by the block ADI iteration. Neither a
discretization of the delay interval nor an explicit construction of the
infinite-dimensional balancing transformations is required.

The complete postprocessing step is summarized in
\Cref{alg:adi-to-rom}.

\begin{algorithm}
\caption{Reduced model from real block ADI data}
\label{alg:adi-to-rom}
\begin{algorithmic}[1]
\Require $A_1,B,C,h$, block-ADI data $X,Y,K,L$, and reduced order $r$
\Ensure $A_r,B_r,C_r$, approximate Hankel singular values, and
$\widehat\varepsilon_r$

\State $K_P\gets K\otimes I_m$, $L_P\gets L\otimes I_m$
\State $D\gets Y^\top A_1X$

\For{$\alpha=1,\ldots,p$}
    \For{$\beta=1,\ldots,m$}
        \State Form $D^{\alpha\beta}$ by \eqref{eq:channel-block-D} and
        compute $\Xi^{\alpha\beta}$ from
        \eqref{eq:channel-block-exponential}
    \EndFor
\EndFor

\State Assemble $\Xi_h$ according to \eqref{eq:assemble-Xi}
\State $M\gets Y^\top X+\Xi_h$,
$B_{\rm f}\gets Y^\top B$, $C_{\rm f}\gets CX$
\State $F\gets MK_P-B_{\rm f}L_P$

\State Compute $M=U\widehat\Sigma V^\top$ and let
$\widehat\sigma_1\ge\cdots\ge\widehat\sigma_s\ge0$,
$s=q\min\{m,p\}$

\State Check $1\le r\le s$ and $\widehat\sigma_r>0$

\State $\widehat\varepsilon_r\gets
2\sum_{k=r+1}^{s}\widehat\sigma_k$

\State Retain $U_r,V_r$ and
$\widehat\Sigma_r=\diag(\widehat\sigma_1,\ldots,\widehat\sigma_r)$

\State $A_r\gets
\widehat\Sigma_r^{-1/2}
U_r^\top F V_r
\widehat\Sigma_r^{-1/2}$

\State $B_r\gets
\widehat\Sigma_r^{-1/2}U_r^\top B_{\rm f}$,
$C_r\gets
C_{\rm f}V_r\widehat\Sigma_r^{-1/2}$

\State \Return $A_r,B_r,C_r$,
$\widehat\sigma_1,\ldots,\widehat\sigma_s$,
$\widehat\varepsilon_r$
\end{algorithmic}
\end{algorithm}

\subsection{Computational cost of the balancing step}
\label{sec:balancing-complexity}

After completion of the ADI iteration, the balancing step operates on
\[
    X\in\R^{n\times mq},
    \qquad
    Y\in\R^{n\times pq},
    \qquad
    M=(S^{[\nu]})^*R^{[\nu]}
      \in\R^{pq\times mq}.
\]
In dense arithmetic, applying $A_1$ to $X$ costs
$\mathcal O(n^2mq)$ operations, while forming
$Y^\top X$ and $Y^\top A_1X$ requires
$\mathcal O(nmpq^2)$ operations. For the history contribution to $M$,
the block-exponential formula of \cite{VanLoan1978} requires one matrix
exponential of order $2q$ for each of the $mp$ input-output channel pairs,
resulting in
$\mathcal O(mpq^3)$ operations.

Using the Kronecker structure of $K_P$, forming
$F=MK_P-B_{\rm f}L_P$
requires $\mathcal O(mpq^3)$ operations up to lower-order terms. A dense
singular value decomposition of $M\in\R^{pq\times mq}$ costs
$\mathcal O\bigl(mp\min\{m,p\}q^3\bigr)$.
Once the leading $r$ singular triplets have been computed, the reduced
matrices can be formed in
$\mathcal O\bigl(
        rmpq^2+\min\{m,p\}qr^2
    \bigr)$
operations by using the more favorable multiplication order.

Thus the overall cost of the balancing step is
\[\mathcal O\bigl(
        n^2mq
        +nmpq^2
        +mpq^3
        +mp\min\{m,p\}q^3
        +rmpq^2
        +\min\{m,p\}qr^2
    \bigr).\]
For small input and output dimensions, the only operations in this
postprocessing step involving the large state dimension $n$ are therefore
the application of $A_1$ and the cross products with $X$ and $Y$.
\subsection{Stopping and order selection}
\label{sec:stopping}

The non-Blaschke condition is asymptotic and therefore does not provide a
practical stopping rule. We use goal-oriented criteria based on nested
reduced models. At a checkpoint, let $r+2g\leq s$ with
$\widehat\sigma_{r+2g}>0$, and let $G_r$, $G_{r+g}$, and $G_{r+2g}$ be
stable reduced models. Orders for which at least one of these three
reduced models is unstable are discarded. On a fixed frequency grid
$\Omega$, define
\[
    d_1(r)
    :=
    \max_{\omega\in\Omega}
    \|G_{r+g}(\mathrm{i}\omega)-G_r(\mathrm{i}\omega)\|_2,
    \qquad
    d_2(r)
    :=
    \max_{\omega\in\Omega}
    \|G_{r+2g}(\mathrm{i}\omega)-G_{r+g}(\mathrm{i}\omega)\|_2.
\]
If $d_1(r)>0$ and
$\rho_r:=d_2(r)/d_1(r)<1$, we set
\[
    \eta_r
    :=
    s_{\rm safe}\frac{d_1(r)}{1-\rho_r},
    \qquad s_{\rm safe}\geq1.
\]
If $d_1(r)=d_2(r)=0$, we set $\eta_r=0$, and otherwise
$\eta_r=+\infty$. The provisional order is the smallest $r$ with
$\eta_r\leq\tau_{\rm bt}$.

For consecutive checkpoints with the same provisional order, let
$s_{\rm old}$ and $s_{\rm new}$ denote the numbers of available
approximate Hankel singular values and set
\[
    k_*:=\min\{r+g,s_{\rm old},s_{\rm new}\}.
\]
We monitor
\[
    \eta_{\rm hsv}
    :=
    \max_{1\le k\le k_*}
    \frac{
        |\widehat\sigma_k^{\rm new}-\widehat\sigma_k^{\rm old}|
    }{
        \max\{\widehat\sigma_k^{\rm new},
        \widehat\sigma_1^{\rm new}\varepsilon_{\rm mach}\}
    },
\]
where $\varepsilon_{\rm mach}$ denotes machine precision, and
\[
    \eta_{\rm rom}
    :=
    \frac{
        \displaystyle
        \max_{\omega\in\Omega}
        \|G_r^{\rm new}(\mathrm{i}\omega)
          -G_r^{\rm old}(\mathrm{i}\omega)\|_2
    }{
        \displaystyle
        \max\left\{
        \max_{\omega\in\Omega}
        \max\{\|G_r^{\rm new}(\mathrm{i}\omega)\|_2,
              \|G_r^{\rm old}(\mathrm{i}\omega)\|_2\},
        \varepsilon_{\rm mach}
        \right\}
    }.
\]
The iteration stops once the provisional order remains unchanged, the
reduced model is stable, and both indicators satisfy prescribed tolerances
for a specified number of consecutive checks.

For reference, let
$\mathcal P_\nu
    :=
    R^{[\nu]}(R^{[\nu]})^*$,
$\mathcal Q_\nu
    :=
    S^{[\nu]}(S^{[\nu]})^*$,
and set
$L_P=L\otimes I_m$ and $L_Q=L\otimes I_p$.
Since
\[
    K_P+K_P^\top=L_P^\top L_P,
    \qquad
    K_Q+K_Q^\top=L_Q^\top L_Q,
\]
the corresponding Lyapunov residuals factor as
\begin{align*}
    \mathcal A\mathcal P_\nu
    +\mathcal P_\nu\mathcal A^*
    +\mathcal B\mathcal B^*
    &=
    W_PW_P^*,
    &
    W_P
    &:=
    \mathcal B-R^{[\nu]}L_P^\top,\\
    \mathcal A^*\mathcal Q_\nu
    +\mathcal Q_\nu\mathcal A
    +\mathcal C^*\mathcal C
    &=
    W_QW_Q^*,
    &
    W_Q
    &:=
    \mathcal C^*-S^{[\nu]}L_Q^\top.
\end{align*}
We report the normalized nuclear residuals
\[
    \rho_P
    :=
    \frac{\|W_P\|_{\rm HS}^2}{\|B\|_F^2},
    \qquad
    \rho_Q
    :=
    \frac{\|W_Q\|_{\rm HS}^2}{\|C\|_F^2}.
\]
These quantities are used only as diagnostics and do not enter the
stopping criterion.
Finally,
\[
    \widehat\varepsilon_r
    :=
    2\sum_{k=r+1}^{s}\widehat\sigma_k
\]
is recorded as a finite-factor diagnostic. Neither $\eta_r$ nor
$\widehat\varepsilon_r$ is a rigorous error bound for the original delay
system.
\section{Multiple delays}
\label{sec:multiple-delays}

The preceding construction extends directly to finitely many discrete
delays. Consider
\begin{equation}
    \dot x(t)
    =
    A_0x(t)
    +
    \sum_{j=1}^d A_jx(t-h_j)
    +
    Bu(t),
    \qquad
    y(t)=Cx(t),
    \qquad
    0<h_1<\cdots<h_d=:h,
    \label{eq:multiple-delay-system}
\end{equation}
and assume exponential stability. On
$\mathcal X=\C^n\times L^2(-h,0;\C^n)$,
the generator is obtained from \eqref{eq:dde-generator} by replacing
$A_1\phi(-h)$ with
$\sum_{j=1}^d A_j\phi(-h_j)$.
The characteristic matrix becomes
\[
    \Delta(s)
    =
    sI-A_0-\sum_{j=1}^d A_je^{-sh_j},
    \qquad
    \Delta'(s)
    =
    I+\sum_{j=1}^d h_jA_je^{-sh_j}.
\]
Hence the characteristic-root refinement and shift construction remain
unchanged apart from replacing the single delay term by the corresponding
finite sum.

The real Takenaka--Malmquist representation is unchanged. With
$K_P=K\otimes I_m$, $L_P=L\otimes I_m$ and
$K_Q=K\otimes I_p$, the reachability coefficient matrix satisfies
\[
    A_0X
    +
    \sum_{j=1}^d A_jXe^{-h_jK_P}
    -
    XK_P
    =
    -BL_P.
\]
The adjoint history component is piecewise $H^1$, with jump conditions at
the interior delay points $-h_j$, $j<d$, and a boundary condition at
$-h$. Accordingly, the observability factor has the form
\[
    Sw
    =
    \begin{pmatrix}
        Yw\\[1mm]
        \displaystyle
        \theta\mapsto
        \sum_{j=1}^d
        \mathbf 1_{[-h_j,0]}(\theta)\,
        A_j^\top Y
        e^{-(\theta+h_j)K_Q}w
    \end{pmatrix},
\]
where
\[
    A_0^\top Y
    +
    \sum_{j=1}^d A_j^\top Ye^{-h_jK_Q}
    -
    YK_Q
    =
    -C^\top L_Q.
\]
The blockwise ADI construction of \Cref{sec:adi-real-block} carries over
by replacing the single delayed term with the corresponding finite sums.
In particular, a real shift still requires a linear system of order $n$,
whereas a conjugate pair requires a real coupled system of order $2n$.
The number of delays therefore affects the assembly of the coefficient
matrices, but not their dimensions.

The only change in the balanced-truncation postprocessing is the history
contribution to the cross product. With
$D_j:=Y^\top A_jX$,
one obtains
\[
    S^*R
    =
    Y^\top X
    +
    \sum_{j=1}^d
    \int_0^{h_j}
        e^{-tK_Q^\top}
        D_j
        e^{-(h_j-t)K_P}\,dt.
\]
Each integral is evaluated by the same block-exponential formula as in
\eqref{eq:channel-block-exponential}. Once $S^*R$ has been assembled, the
remaining balanced-truncation formulas and the stopping criteria are
unchanged.

Finally, the proof of
\Cref{prop:dde-reachability-regularity} extends verbatim, with
\[
    \Phi'(t)
    =
    A_0\Phi(t)
    +
    \sum_{j=1}^d
    \mathbf 1_{[h_j,\infty)}(t)A_j\Phi(t-h_j).
\]
Hence
$\ran\mathcal R\subset\dom(\mathcal A)$,
and \Cref{thm:adi-bt-convergence} applies to
\eqref{eq:multiple-delay-system} as well.

The implementation supplied as Online Resource~1 and the numerical
experiments below use the single-delay specialization.

\section{Numerical experiments}
\label{sec:numerics}
We assess the method in two complementary settings. First, small delay
systems are used for validation against independent Chebyshev--Lobatto
discretizations of the history-state realization. Second, a resonant
mechanical system with delayed internal coupling serves as a benchmark for
the complete procedure. All numerical experiments use the single-delay
formulation \eqref{eq:dde}.

All computations use the real block implementation of
\Cref{sec:adi-realification,sec:adi-real-block}. We additionally verified
the implementation against the complex Takenaka--Malmquist coordinates for
identical shift sequences; the resulting cross products, approximate
Hankel singular values, and reduced transfer functions agree to roundoff.

Since all system matrices are real,
$\|G(-\mathrm{i}\omega)\|_2=\|G(\mathrm{i}\omega)\|_2$, and likewise for
the transfer-function differences considered below. It therefore suffices
to sample nonnegative frequencies. For the validation examples,
transfer-function errors are evaluated on
\[
    \Omega_{\rm test}
    =
    \left\{
        10^{-3+6(j-1)/1999}
        :
        j=1,\ldots,2000
    \right\},
\]
and we write
\[
    e_\Omega(G_1,G_2)
    :=
    \max_{\omega\in\Omega_{\rm test}}
    \|G_1(\mathrm{i}\omega)-G_2(\mathrm{i}\omega)\|_2.
\]
The transfer function of the delay system is evaluated directly as
\[
    G(s)
    =
    C\bigl(sI-A_0-A_1e^{-sh}\bigr)^{-1}B.
\]
Unless stated otherwise, characteristic roots for shift generation are
localized with Chebyshev order $60$. The initial search includes characteristic roots in the half-plane
$\Re\lambda\geq-2/h$.
The search region is enlarged whenever additional shift blocks are
required. Shift blocks are processed in successive bands containing four
blocks, with each band repeated twice.

For goal-oriented order selection and successive-ROM checks we use
\[
    \Omega_{\rm stop}
    =
    \{0\}
    \cup
    \left\{
        \omega_*10^{-3+6(j-1)/23}
        :
        j=1,\ldots,24
    \right\},
    \quad
    \omega_*
    =
    \max\{1,h^{-1},\|A_0\|_2,\|A_1\|_2\}.
\]
We use $g=4$ for the nested reduced models and
$s_{\rm safe}=2$. The stopping tolerances for the approximate Hankel
singular values and successive reduced models are both $10^{-3}$, and two
consecutive successful checks are required.

\subsection{Validation and independent reference}
\label{sec:numerics-validation}

We first consider three stable single-delay systems. The scalar example is
\[
    A_0=-2,\qquad
    A_1=0.5,\qquad
    B=C=1,\qquad
    h=1.
\]
For the two-dimensional examples we use
\[
    A_0=
    \begin{bmatrix}
        -2&0.3\\
        -0.2&-1.5
    \end{bmatrix},
    \qquad
    A_1=
    \begin{bmatrix}
        -0.2&0.1\\
        0.05&-0.15
    \end{bmatrix},
    \qquad
    h=1.
\]
The SISO case has
\[
    B=
    \begin{bmatrix}1\\0.5\end{bmatrix},
    \qquad
    C=
    \begin{bmatrix}1&0.3\end{bmatrix},
\]
whereas the MIMO case uses
\[
    B=
    \begin{bmatrix}
        1&0.2\\
        0.3&1
    \end{bmatrix},
    \qquad
    C=
    \begin{bmatrix}
        1&0.3\\
        0.2&1
    \end{bmatrix}.
\]
For these tests we use the absolute order-selection tolerance
$\tau_{\rm bt}=10^{-3}$. The automatically selected orders and sampled
transfer errors are reported in \Cref{tab:regression-tests}.

\begin{table}[t]
\centering
\caption{Low-dimensional validation examples. Here $J$ denotes the number
of scalar ADI shifts and $\alpha(A_r)=\max\Re\sigma(A_r)$.}
\label{tab:regression-tests}
\begin{tabular}{lrrrr}
\toprule
example
& $r$
& $J$
& $e_\Omega(G,G_r)$
& $\alpha(A_r)$
\\
\midrule
scalar SISO
& 12
& 78
& $5.676\cdot10^{-4}$
& $-4.465\cdot10^{-1}$
\\
$n=2$ SISO
& 11
& 96
& $1.849\cdot10^{-4}$
& $-5.266\cdot10^{-1}$
\\
$n=2$ MIMO
& 15
& 80
& $4.212\cdot10^{-4}$
& $-6.568\cdot10^{-1}$
\\
\bottomrule
\end{tabular}
\end{table}

For an independent validation of the operator products entering balanced
truncation, we compare the MIMO example with Chebyshev--Lobatto
discretizations of the history transport equation; see
\cite{BredaMasetVermiglio2005,MichielsJarlebringMeerbergen2011}.
A collocation order $N$ produces a finite-dimensional realization of
dimension $2(N+1)$. We compute its dense Gramians and apply square-root
balanced truncation with the same order $r=15$ as for the operator-ADI
model. Denoting the collocation transfer function by $G_N$ and its
order-$15$ balanced truncation by $G_{r,N}$ gives \Cref{tab:chebyshev-reference}.

\begin{table}[t]
\centering
\caption{Comparison with Chebyshev--Lobatto reference models for the
two-dimensional MIMO example.}
\label{tab:chebyshev-reference}
\begin{tabular}{rrrrrr}
\toprule
$N$
& dim.
& $\alpha(A_N)$
& $e_\Omega(G,G_N)$
& $e_\Omega(G,G_{r,N})$
& $e_\Omega(G_r,G_{r,N})$
\\
\midrule
20
& 42
& $-1.827$
& $1.117\cdot10^{-4}$
& $4.212\cdot10^{-4}$
& $6.979\cdot10^{-6}$
\\
40
& 82
& $-1.827$
& $3.244\cdot10^{-5}$
& $4.212\cdot10^{-4}$
& $5.092\cdot10^{-8}$
\\
60
& 122
& $-1.827$
& $1.611\cdot10^{-5}$
& $4.212\cdot10^{-4}$
& $6.365\cdot10^{-8}$
\\
\bottomrule
\end{tabular}
\end{table}

Thus the operator-ADI reduced model and the balanced truncations of the two
finest spectral realizations agree to approximately $10^{-8}$ on
$\Omega_{\rm test}$. The agreement also holds for the approximate Hankel
singular values: for the first $20$ values, the relative difference between
the operator-ADI computation and the $N=60$ Chebyshev reference is below
$4\cdot10^{-5}$.

\subsection{A resonant mechanical delay benchmark}
\label{sec:numerics-mechanical}

We next consider a mechanical system with eight degrees of freedom and
delayed internal coupling,
\begin{equation}
    M\ddot q(t)
    +D\dot q(t)
    +Kq(t)
    +D_d\dot q(t-h)
    +K_dq(t-h)
    =
    B_u u(t),
    \qquad
    y(t)=C_q q(t),
    \label{eq:mechanical-benchmark}
\end{equation}
where $q(t)\in\R^8$ and $h=1$. The mass matrix is
\[
    M
    =
    \diag(1,1.15,0.9,1.1,0.95,1.05,1.2,0.85).
\]
The stiffness matrix $K$ is that of a chain with boundary and
interconnection spring constants
\[
    (k_0,\ldots,k_8)
    =
    (45,32,55,38,60,35,52,41,58),
\]
that is,
$K_{jj}=k_{j-1}+k_j$,
$K_{j,j+1}=K_{j+1,j}=-k_j$.
The instantaneous damping is
$D=0.0675M+10^{-3}K$.
Writing $e_j$ for the $j$th canonical vector in $\R^8$, set
$e_a=e_2-e_7$,
$e_b=e_3-e_6$.
The delayed stiffness and damping couplings are
\[
    K_d
    =
    6\bigl(e_ae_a^\top+0.7e_be_b^\top\bigr),
    \qquad
    D_d
    =
    0.002\bigl(e_ae_a^\top+0.5e_be_b^\top\bigr).
\]
Two forces act at the outer masses and two interior displacements are
observed:
\[
    B_u=[\,e_1\;\;e_8\,],
    \qquad
    C_q=
    \begin{bmatrix}
        e_3^\top\\
        e_6^\top
    \end{bmatrix}.
\]

With $x=(q^\top,\dot q^\top)^\top$,
\eqref{eq:mechanical-benchmark} has the form \eqref{eq:dde} with
instantaneous state dimension $n=16$ and
\begin{align*}
    A_0
    &=
    \begin{bmatrix}
        0&I\\
        -M^{-1}K&-M^{-1}D
    \end{bmatrix},
    &
    A_1
    &=
    \begin{bmatrix}
        0&0\\
        -M^{-1}K_d&-M^{-1}D_d
    \end{bmatrix},
\\
    B
    &=
    \begin{bmatrix}
        0\\M^{-1}B_u
    \end{bmatrix},
    &
    C&=
    \begin{bmatrix}
        C_q&0
    \end{bmatrix}.
\end{align*}

The parameters are chosen so that the delayed coupling has a pronounced
effect on the resonant input--output dynamics. In particular, the delay
does not merely perturb individual resonance frequencies: compared with
the system obtained by removing the delayed term, it substantially
enriches and reshapes the resonance pattern.

As an independent numerical stability check, we use Chebyshev--Lobatto
discretizations of the history generator. For collocation orders
$N=40,60,80$, the computed spectral abscissae agree to the displayed
precision and give
\[
    \alpha_{\rm DDE}
    \approx
    -1.0643824762\cdot10^{-2}.
\]
To illustrate the effect of the delay, we compare the delay-system
transfer function with
\[
    G_0(s)
    :=
    C(sI-A_0)^{-1}B,
\]
which is obtained by removing the delayed term while leaving the
instantaneous mechanical system unchanged. The left panel of
\Cref{fig:mechanical-benchmark} shows that the two frequency responses
have markedly different resonance structures. The order-$19$ reduced
model, on the other hand, follows the delay-system response closely over the
entire displayed frequency range.

For this benchmark we use
$\tau_{\rm bt}=5\cdot10^{-3}$.
The algorithm terminates after
$J=96$
scalar ADI shifts, corresponding to $48$ real block steps, and selects
$r=19$.
In particular,
$r=19>16=n$.
This is not a failure of model reduction. Balanced truncation is applied
to the infinite-dimensional state space
$\C^{16}\times L^2(-1,0;\C^{16})$,
rather than to the $16$-dimensional instantaneous state alone. The
dimension of the delay-free surrogate is therefore not constrained by
$n$.

The reduced model is stable, with
\[
    \alpha(A_r)
    =
    -1.077\cdot10^{-2}.
\]
For the final error evaluation we use the denser logarithmic grid
\[
    \Omega_{\rm mech}
    =
    \left\{
        10^{-3+6(j-1)/11999}
        :
        j=1,\ldots,12000
    \right\}.
\]
This gives
\[
    e_{\rm mech}
    :=
    \max_{\omega\in\Omega_{\rm mech}}
    \|G(\mathrm{i}\omega)-G_r(\mathrm{i}\omega)\|_2
    =
    2.421182\cdot10^{-3},
\]
with the maximum attained at the low-frequency end of the grid,
$\omega=10^{-3}$. Relative to the maximum sampled gain of the delay
system,
\[
    \frac{e_{\rm mech}}
    {\displaystyle
     \max_{\omega\in\Omega_{\rm mech}}
     \|G(\mathrm{i}\omega)\|_2}
    =
    2.27372\cdot10^{-3},
\]
that is, approximately $0.227\%$.

At termination, the goal-oriented ROM-tail indicator and the finite-factor
balanced-trun\-cation tail diagnostic are
\[
    \eta_r
    =
    4.851\cdot10^{-3},
    \qquad
    \widehat\varepsilon_r
    =
    2.886\cdot10^{-3},
\]
whereas the global ADI residual indicators are
\[
    \rho_P
    =
    5.690\cdot10^{-3},
    \qquad
    \rho_Q
    =
    2.223\cdot10^{-5}.
\]
As discussed in \Cref{sec:stopping},
$\eta_r$ and $\widehat\varepsilon_r$ are diagnostics rather than rigorous
a posteriori error bounds for the original delay system. The right panel
of \Cref{fig:mechanical-benchmark} shows the frequency-dependent error
together with the finite-factor balanced-truncation tail diagnostic
$\widehat\varepsilon_r$.

\begin{figure}[t]
\centering
\definecolor{fullblue}{RGB}{0,114,178}
\definecolor{nodelayorange}{RGB}{213,94,0}
\definecolor{romgreen}{RGB}{0,158,115}

\begin{tikzpicture}
\begin{groupplot}[
    group style={
        group size=2 by 1,
        horizontal sep=1.15cm,
    },
    width=0.5\textwidth,
    height=5.35cm,
    ymode=log,
    xlabel={$\omega$},
    xlabel style={font=\small},
    ylabel style={font=\small},
    tick label style={font=\scriptsize},
    title style={font=\small},
    grid=both,
    major grid style={draw=gray!24},
    minor grid style={draw=gray!11},
    axis line style={line width=0.55pt},
    tick style={line width=0.45pt},
]

\nextgroupplot[
    title={(a) Frequency responses},
    xmin=0,
    xmax=16,
    xtick={0,4,8,12,16},
    ymin=3e-4,
    ymax=2,
    ylabel={$\|G(\mathrm{i}\omega)\|_2$},
    legend style={
        at={(0.03,0.03)},
        anchor=south west,
        font=\scriptsize,
        draw=black!35,
        fill=white,
        fill opacity=0.90,
        text opacity=1,
        inner sep=2pt,
        row sep=-1pt,
    },
]

\addplot[
    color=fullblue,
    solid,
    line width=1.35pt,
    mark=none
]
table[x=omega,y=full] {mechanical_frequency.dat};
\addlegendentry{full delay system}

\addplot[
    color=nodelayorange,
    dashed,
    line width=1.35pt,
    mark=none
]
table[x=omega,y=nodelay] {mechanical_frequency.dat};
\addlegendentry{without delay term}

\addplot[
    color=romgreen,
    dashdotted,
    line width=1.20pt,
    mark=none
]
table[x=omega,y=rom] {mechanical_frequency.dat};
\addlegendentry{reduced model, $r=19$}

\nextgroupplot[
    title={(b) Reduction error},
    xmin=0,
    xmax=30,
    xtick={0,10,20,30},
    ymin=1e-4,
    ymax=5e-3,
    ylabel={$\|G-G_r\|_2$},
    ylabel style={
        at={(axis description cs:1.12,0.5)},
        anchor=south
    },
]

\addplot[
    color=fullblue,
    solid,
    line width=1.35pt,
    mark=none
]
table[x=omega,y=error] {mechanical_error.dat};

\addplot[
    color=romgreen,
    densely dotted,
    line width=1.20pt,
    mark=none
]
coordinates {(0,2.886e-3) (30,2.886e-3)};

\node[
    anchor=south east,
    font=\scriptsize,
    text=romgreen,
    fill=white,
    fill opacity=0.85,
    text opacity=1,
    inner sep=1pt
] at (axis cs:29.2,2.886e-3)
{$\widehat{\varepsilon}_r$};

\end{groupplot}
\end{tikzpicture}
\caption{Mechanical delay benchmark.
Left: frequency responses of the delay system, the system obtained
by removing the delayed term, and the order-$19$ reduced model.
Right: frequency-dependent reduction error; the dotted line indicates
the finite-factor balanced-truncation tail diagnostic
$\widehat{\varepsilon}_r$.}
\label{fig:mechanical-benchmark}
\end{figure}

The convergence history further illustrates the distinction between
goal-oriented stopping and reduction of the global LR-ADI residuals. The provisional
order has already settled at $r=19$ by $J=48$. Selected quantities at the
subsequent checkpoints are reported in
\Cref{tab:mechanical-convergence}.

\begin{table}[t]
\centering
\caption{Convergence history for the mechanical benchmark. A dash indicates
that the stabilization indicator is not defined because the provisional
order changed at the corresponding checkpoint.}
\label{tab:mechanical-convergence}
\begin{tabular}{rrrrrr}
\toprule
$J$
& $\rho_P$
& $\rho_Q$
& $\eta_{\rm hsv}$
& $\eta_{\rm rom}$
& $r$
\\
\midrule
48
& $3.010\cdot10^{-2}$
& $2.127\cdot10^{-3}$
& --
& --
& 19
\\
64
& $1.141\cdot10^{-2}$
& $1.720\cdot10^{-4}$
& $3.391\cdot10^{-2}$
& $2.152\cdot10^{-7}$
& 19
\\
80
& $7.558\cdot10^{-3}$
& $5.028\cdot10^{-5}$
& $9.760\cdot10^{-8}$
& $4.215\cdot10^{-13}$
& 19
\\
96
& $5.690\cdot10^{-3}$
& $2.223\cdot10^{-5}$
& $1.109\cdot10^{-10}$
& $2.718\cdot10^{-14}$
& 19
\\
\bottomrule
\end{tabular}
\end{table}

Thus the reduced input--output model has effectively stabilized while the
global reachability residual is still several orders of magnitude larger
than the reduced-model change. At $J=80$, both stabilization indicators
already satisfy their tolerances; the iteration terminates at $J=96$ after
the required second successful checkpoint.

The complete MATLAB call for this benchmark, including characteristic-root
localization, the ADI iteration, stopping and order selection, and the final
balanced-truncation postprocessing, required $1.73\,\mathrm{s}$
on a Windows 64-bit system with an Intel Core Ultra 7 258V processor and
$32\,\mathrm{GB}$ RAM, using MATLAB R2026a Update~2.

\section{Conclusions}
\label{sec:conclusions}

We have developed an operator-ADI approach to balanced truncation of delay
differential systems that works directly with the infinite-dimensional
history-state realization while reducing the required computations to
finite-dimensional linear algebra. Under suitable assumptions, the
resulting reduced transfer functions converge in $\mathcal H_\infty$ to
the exact infinite-dimensional balanced truncation.

The construction extends directly to finitely many discrete delays without
increasing the dimensions of the shifted linear systems. Extensions to more
adaptive shift strategies and to distributed or neutral delays remain
topics for future work.

\backmatter

\bmhead*{Supplementary information}
The MATLAB implementation used for the numerical experiments is provided
as Online Resource~1.

\bmhead*{Statements and Declarations}

\bmhead*{Conflict of Interest}
The author declares that he has no conflict of interest.

\bmhead*{Funding}
The author received no external funding for this research.

\bmhead*{Author contributions}
The author conceived the study, developed the methodology, performed the analysis and numerical experiments, wrote the software, and prepared and revised the manuscript.

\bmhead*{Acknowledgement}
Not applicable.

\bmhead*{Data availability}
No external datasets were used. All numerical data reported in this article
are generated by the MATLAB implementation provided as Online Resource~1.

\bmhead*{Code availability}
The MATLAB implementation used to generate the numerical results is provided
as Online Resource~1.

\bmhead*{Use of generative AI}
During the preparation of this manuscript, the author used ChatGPT
(OpenAI) for limited assistance with language editing and as a coding
assistant for parts of the MATLAB implementation. All mathematical ideas,
methods, arguments, and proofs are the author's own. All AI-assisted text
and code were critically reviewed and, where necessary, revised by the
author, who takes full responsibility for the final content.

\bibliography{references}

\end{document}